\documentclass[12 pt]{amsart}
\usepackage{amsmath, amsthm, amscd, amsfonts, amssymb, graphicx, color}
\usepackage[ colorlinks, plainpages]{hyperref}
\usepackage{tikz}
\usepackage{pgfplots}
\usetikzlibrary{automata}
\newtheorem{theorem}{Theorem}[section]
\newtheorem{lemma}[theorem]{Lemma}
\newtheorem{proposition}[theorem]{Proposition}
\newtheorem{corollary}[theorem]{Corollary}
\theoremstyle{definition}
\newtheorem{definition}[theorem]{Definition}
\newtheorem{example}[theorem]{Example}

\theoremstyle{remark}
\newtheorem{remark}[theorem]{Remark}
\numberwithin{equation}{section}
\begin{document}
\title[]{Edge Ideals of Prime Ideal Graphs over Finite Rings: Ordinary Powers, Fiber Cones, and Linear Powers}
\author[]{Tabinda Rasheed$^1$, Wang Yao$^2$ }
\maketitle
{\center{$^{1,2}$ School of Mathematics and Statistics, \\Nanjing University of Information Science and Technology, Nanjing 210044, P.R. China.}\\
\footnotesize{Corresponding author. Email address:
\href{mailto:xx@xx}{tabindarasheed00@gmail.com}, wangyao@nuist.edu.cn} \\ {}}

\begin{abstract}
Let \(R\) be a finite commutative ring with identity and let \(P\) be a proper prime ideal of \(R\). The prime ideal graph \(\Gamma_P(R)\) has vertex set \(R\setminus\{0\}\), where two distinct vertices \(x\) and \(y\) are adjacent if and only if \(xy\in P\). We prove that prime ideal graphs form a ring-realizable subfamily of complete split graphs. More precisely, if $m=|P|$, $q=|R/P|$,
then \(q\) is a prime power and
$\Gamma_P(R)\cong K_{m-1}\vee \overline{K}_{m(q-1)}$.
We also prove a realization theorem showing that every complete split graph of this form arises from a prime ideal of a finite commutative ring. For the edge ideal \(I=I(\Gamma_P(R))\), we determine the minimal vertex covers and obtain the irredundant primary decomposition. We characterize the minimal monomial generators of every ordinary power \(I^n\) and derive a closed formula for \(\mu(I^n)\). We further interpret this formula as the Hilbert function of the special fiber ring \(\mathcal{F}(I)\), compute the analytic spread, and prove that \(\mathcal{F}(I)\) is a normal Cohen--Macaulay affine semigroup ring. Finally, we show that \(I\) is matroidal and that every ordinary power \(I^n\) is polymatroidal; consequently, \(I^n\) has linear quotients and a \(2n\)-linear minimal free resolution for all \(n\geq 1\).
\end{abstract}

\noindent\textbf{Keywords:}
Prime ideal graph; finite commutative ring; edge ideal; special fiber ring; polymatroidal ideal; linear powers.

\noindent\textbf{2020 MSC:}
13F55, 05C25, 05E40, 13A15, 13H10.

\section{Introduction}

Edge ideals provide an effective algebraic framework for studying finite simple graphs through monomial ideals. If \(G\) is a finite simple graph on the vertex set $V(G)=\{z_1,\ldots,z_r\}$, then the edge ideal of \(G\) is the square-free quadratic monomial ideal
\[
I(G)=(z_i z_j:\{z_i,z_j\}\in E(G))\subseteq k[z_1,\ldots,z_r].
\]
Since the foundational work of Villarreal and of Simis, Vasconcelos, and Villarreal on the ideal theory of graphs, edge ideals have become central objects in combinatorial commutative algebra ~\cite{VIL,SIM,VILL}. Their algebraic invariants often reflect important combinatorial properties of the underlying graph. For example, minimal vertex covers correspond to minimal primes of \(I(G)\), while homological invariants such as regularity, projective dimension, Betti numbers, and linear resolutions are closely related to the structure of \(G\) ~\cite{FRO,MOR,HERZOG,DAO}. Recent work has continued to study algebraic and homological properties of edge ideals and cover ideals for special graph families; for instance, edge ideals and cover ideals of unbalanced crown graphs were investigated in ~\cite{RAT}.

A major direction in the study of edge ideals concerns the behavior of their ordinary powers. Although \(I(G)\) is generated by square-free quadratic monomials, the higher powers \(I(G)^n\) may have complicated minimal generating sets and homological behavior. Understanding the minimal generators \(G(I(G)^n)\), the generator count \(\mu(I(G)^n)\), and the regularity of \(I(G)^n\) is therefore a natural problem in combinatorial commutative algebra. Several works have studied ordinary powers of edge ideals from the viewpoints of regularity, associated primes, and linear resolutions ~\cite{HERZOG, FRAN, GU}. In particular, graph families for which all ordinary powers admit explicit generator descriptions and linear resolutions provide useful test cases where combinatorial and homological phenomena can be computed exactly.
Polymatroidal ideals form one of the most important classes of monomial ideals with good homological behavior. The theory of discrete polymatroids developed by Herzog and Hibi shows that polymatroidal ideals satisfy a strong exchange property, which leads to linear quotients and linear resolutions under suitable hypotheses ~\cite{HER}. Ideals with linear quotients have also been studied through mapping cone techniques and related homological methods ~\cite{HERZO, JAH}. Moreover, products and powers of polymatroidal ideals often retain favorable algebraic properties, making polymatroidality a useful tool for proving linear powers ~\cite{CON, BAN}. Stability properties of powers of polymatroidal ideals, including behavior related to associated primes and depth, have also been studied recently ~\cite{MAF}. Thus, when an edge ideal or its powers can be shown to be polymatroidal, one obtains not only a combinatorial description of the generators but also strong homological consequences.
In parallel, edge rings and special fiber rings provide another important viewpoint on monomial ideals. If \(I\subseteq S\) is a homogeneous ideal generated in one degree and \(\mathfrak m\) is the homogeneous maximal ideal of \(S\), then the special fiber ring
\[
\mathcal{F}(I)=\bigoplus_{n\geq 0}I^n/\mathfrak m I^n
\]
encodes the asymptotic behavior of the minimal generators of the powers of \(I\). In particular, $\operatorname{HF}_{\mathcal{F}(I)}(n)=\mu(I^n)$.
For edge ideals, the special fiber ring is closely related to the edge ring of the underlying graph. Normality and Cohen-Macaulayness of edge rings have been studied through affine semigroup methods and graph-theoretic criteria such as the odd-cycle condition ~\cite{OH,VILL}. Related Cohen-Macaulay and Gorenstein properties associated with graph complexes have also appeared in recent work ~\cite{NIK}. Therefore, computing the special fiber ring and the analytic spread of an edge ideal gives structural information beyond the minimal generators of its ordinary powers.

The present paper studies edge ideals arising from prime ideal graphs of finite commutative rings. Prime ideal graphs were introduced by Salih and Jund ~\cite{SAL}. If \(R\) is a finite commutative ring with identity and \(P\) is a proper prime ideal of \(R\), then the prime ideal graph \(\Gamma_P(R)\) has vertex set \(R\setminus\{0\}\), and two distinct vertices \(x\) and \(y\) are adjacent if and only if $xy\in P$.
Subsequent work studied graph-theoretic properties of these graphs, including connectedness, diameter, clique number, and chromatic number ~\cite{KUR}. More broadly, graph constructions arising from finite rings continue to attract attention, including recent work on zero-divisor graphs and their graph invariants ~\cite{DO}. These studies show that graphs associated with finite rings form a natural bridge between ring theory and graph theory. However, the edge ideals associated with prime ideal graphs, their ordinary powers, and their fiber-cone invariants have not been studied in comparable algebraic detail.

The first contribution of this paper is a ring-theoretic classification of the complete split graphs arising from prime ideal graphs. Let $m=|P|$, $q=|R/P|$. Since \(P\) is prime and \(R\) is finite, the quotient \(R/P\) is a finite field; hence \(q\) is a prime power and \(|R|=mq\). We prove that $\Gamma_P(R)\cong K_{m-1}\vee \overline{K}_{m(q-1)}$.
Thus, the complete split graphs arising from prime ideals of finite rings are not arbitrary; their parameters are determined by the finite-ring pair \((|P|,|R/P|)\). We also prove a realization theorem showing that every complete split graph of the form $K_{m-1}\vee \overline{K}_{m(q-1)}$,
where \(q\) is a prime power, occurs as the prime ideal graph of a finite commutative ring. Therefore, the family considered in this paper is a ring-realizable subfamily of complete split graphs whose parameters are determined by finite-ring data.
The distinction from arbitrary complete split graphs is important. A general complete split graph \(K_a\vee \overline{K}_b\) has two independent graph parameters \(a\) and \(b\). In contrast, a prime ideal graph of a finite commutative ring satisfies $a=|P|-1$, $b=|P|(|R/P|-1)$. Equivalently, $b=(a+1)(q-1)$,
where \(q=|R/P|\) is a prime power. Hence the finite-ring origin imposes an arithmetic constraint on the split-graph parameters. Complete split graphs form a basic and tractable class in graph theory, lying within the broader family of split and threshold-type graphs. The finite-ring construction considered here therefore provides a natural algebraic laboratory in which ordinary powers, primary decompositions, special fiber rings, analytic spread, and linear resolutions of the associated edge ideals can be computed explicitly in terms of the ring-theoretic pair \((|P|,|R/P|)\).
Using this classification, we study the edge ideal \(I=I(\Gamma_P(R))\). Writing $a=|P|-1$, $b=|P|(|R/P|-1)$,
we work in the polynomial ring \(S=k[x_1,\ldots,x_a,y_1,\ldots,y_b]\). Since the vertices corresponding to \(P\setminus\{0\}\) form a clique, the vertices corresponding to \(R\setminus P\) form an independent set, and every clique vertex is adjacent to every independent vertex, the edge ideal is
\[
I=(x_i x_j,\;x_i y_\ell
\mid 1\leq i<j\leq a,\;1\leq i\leq a,\;1\leq \ell\leq b).
\]
We determine the minimal vertex covers of \(\Gamma_P(R)\) and obtain an irredundant primary decomposition of \(I\). In particular, the number of minimal primes is exactly \(|P|\), giving a direct ring-theoretic interpretation of the primary decomposition.
We then describe the minimal monomial generators of every ordinary power \(I^n\). More precisely, for every \(n\geq 1\), we prove that
\[
G(I^n)=
\left\{
x^\alpha y^\beta:
|\alpha|+|\beta|=2n,\;
|\beta|\leq n,\;
0\leq \alpha_i\leq n
\text{ for all }i
\right\}.
\]
This yields a closed formula for \(\mu(I^n)\) in terms of \(n\), \(|P|\), and \(|R/P|\). We further interpret this formula as the Hilbert function of the special fiber ring \(\mathcal{F}(I)\), compute the analytic spread, and prove that \(\mathcal{F}(I)\) is a normal Cohen--Macaulay affine semigroup ring.
Finally, we show that \(I\) is matroidal and hence polymatroidal. Consequently, every ordinary power \(I^n\) is polymatroidal, has linear quotients, and has a \(2n\)-linear minimal free resolution. We also record a parameter-invariance result showing that the ordinary-power generator counts, special-fiber Hilbert functions, analytic spread, and regularity sequence depend only on the pair \((|P|,|R/P|)\), not on the particular presentation of the ring.

The results of this paper lie at the intersection of finite ring theory, graph theory, and combinatorial commutative algebra. The prime ideal graph construction provides a finite-ring source of complete split graphs whose parameters are constrained by residue-field and prime-ideal data. From this input, we obtain explicit primary decompositions, ordinary-power formulas, fiber-cone Hilbert functions, analytic spread, normality, Cohen-Macaulayness, and linear-resolution results for the corresponding edge ideals. The novelty of this paper is twofold. First, prime ideal graphs are shown to form an arithmetically constrained, ring-realizable subfamily of complete split graphs, rather than an arbitrary split-graph class. Second, for this ring-induced family, we give a uniform algebraic analysis of the associated edge ideals, including primary decompositions, ordinary power generators, fiber cone Hilbert functions, analytic spread, normality, Cohen-Macaulayness, polymatroidality, and linear powers. These results are expressed directly in terms of the finite-ring parameters \(|P|\) and \(|R/P|\).

The paper is organized as follows. In Section~2, we classify prime ideal graphs as ring-realizable complete split graphs and prove the realization theorem. In Section~3, we determine the minimal vertex covers and the primary decomposition of the edge ideal. Section~4 describes the minimal generators of all ordinary powers and derives the formula for \(\mu(I^n)\). Section~5 studies the special fiber ring, analytic spread, normality, and Cohen-Macaulayness. Section~6 proves polymatroidality, linear quotients, linear resolutions, and parameter invariance. Section~7 gives examples, and the final section concludes the paper.

\section{Ring-induced complete split graphs}

In this section, we refine the graph-theoretic reduction of prime ideal graphs. The purpose is to show that the complete split graphs arising from prime ideals of
finite rings are not arbitrary complete split graphs. Their parameters are constrained by the quotient field \(R/P\). This gives a ring-theoretic classification and a realization result for the graph family considered in the paper.

Throughout this section, \(R\) denotes a finite commutative ring with identity and
\(P\) denotes a proper prime ideal of \(R\). The prime ideal graph \(\Gamma_P(R)\)
has vertex set \(R\setminus\{0\}\), and two distinct vertices \(x,y\in R\setminus\{0\}\)
are adjacent if and only if \(xy\in P\).
For two vertex-disjoint graphs \(G\) and \(H\), their join \(G\vee H\) is obtained
from the disjoint union of \(G\) and \(H\) by adding all edges between every vertex
of \(G\) and every vertex of \(H\). We write \(\overline{K}_b\) for the edgeless graph
on \(b\) vertices.

\begin{proposition}\label{prop:ring-induced-split}
Let \(R\) be a finite commutative ring with identity and let \(P\) be a proper prime ideal of \(R\). Set $m=|P|$, $q=|R/P|$. Then \(q\) is a prime power and $|R|=mq$. Moreover,
\[
\Gamma_P(R)\cong K_{m-1}\vee \overline{K}_{m(q-1)}.
\]
Equivalently,
\[
\Gamma_P(R)\cong K_{|P|-1}\vee \overline{K}_{|P|(|R/P|-1)}.
\]
\end{proposition}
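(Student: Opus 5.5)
The plan is to first settle the arithmetic facts and then read off the adjacency structure directly from the primality of \(P\).

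For the arithmetic, note that \(R/P\) is a finite integral domain, because \(P\) is a proper prime ideal and \(R\) is finite. A finite integral domain is a field: multiplication by a nonzero element is injective, hence bijective on a finite set. So \(R/P\) is a finite field, and its order \(q\) is a power of its characteristic prime. The cosets of the additive subgroup \(P\) partition \(R\) into \(q\) classes of size \(m\), and Lagrange's theorem gives \(|R|=mq\).

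For the graph, partition the vertex set \(R\setminus\{0\}\) into \(A=P\setminus\{0\}\), with \(|A|=m-1\), and \(B=R\setminus P\), with \(|B|=mq-m=m(q-1)\). Since \(0\in P\), these two sets are disjoint and exhaust the vertices. There are three kinds of pairs to check.
\begin{itemize}
\item Two distinct vertices \(x,y\in A\): the product \(xy\) lies in \(P\), because \(P\) is an ideal. So \(A\) induces the complete graph \(K_{m-1}\).
\item A vertex \(x\in A\) and a vertex \(y\in B\): again \(xy\in P\) by the ideal property. So every vertex of \(A\) is adjacent to every vertex of \(B\).
\item Two distinct vertices \(x,y\in B\): primality gives \(xy\notin P\), since \(x\notin P\) and \(y\notin P\). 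So \(B\) is an independent set, inducing \(\overline{K}_{m(q-1)}\).
\end{itemize}
The identity map on vertices is therefore an isomorphism \(\Gamma_P(R)\cong K_{m-1}\vee\overline{K}_{m(q-1)}\), by the definition of the join.

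I see no real obstacle here. The only points needing care are the degenerate cases:
\begin{itemize}
\item When \(P=0\) (that is, \(m=1\)), the clique part is empty and the graph is edgeless on \(q-1\) vertices. This agrees with \(K_0\vee\overline{K}_{q-1}\), so the formula still holds.
\item \(P\neq R\) guarantees \(q\ge2\).
\item Loops do not arise: adjacency is only defined between distinct vertices, so elements with \(x^2\in P\) cause no issue.
\end{itemize}
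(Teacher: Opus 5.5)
Your proposal is correct and follows the paper's proof essentially step for step. In both, $R/P$ is a finite field, so $q$ is a prime power; the coset count gives $|R|=mq$; and the partition $A=P\setminus\{0\}$, $B=R\setminus P$, together with the ideal property and primality, yields the clique, the full join between $A$ and $B$, and the independent set $B$. Your extra remarks (why a finite domain is a field, and the degenerate case $P=(0)$ giving $K_0\vee\overline{K}_{q-1}$) are consistent with the paper, which handles the latter in a separate remark.
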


\begin{proof}
Since \(P\) is a prime ideal of \(R\), the quotient ring \(R/P\) is an integral domain. Since \(R\) is finite, \(R/P\) is a finite integral domain, hence a finite field. Therefore \(q=|R/P|\) is a prime power. The quotient map $\pi:R\longrightarrow R/P$ has kernel \(P\), and each coset of \(P\) has exactly \(|P|=m\) elements. Hence
$|R|=|P|\cdot |R/P|=mq$. Now set
$A=P\setminus\{0\}$, $B=R\setminus P$.
Then
\[
|A|=|P|-1=m-1\quad \text{and}\quad |B|=|R|-|P|=mq-m=m(q-1).
\]
We show that \(A\) is a clique, \(B\) is an independent set, and every vertex in \(A\) is adjacent to every vertex in \(B\). If \(a,a'\in A\) are distinct, then \(a,a'\in P\). Since \(P\) is an ideal, we have \(aa'\in P\). Therefore, every two distinct vertices of \(A\) are adjacent, so \(A\) is a clique. If \(a\in A\) and \(b\in B\), then \(a\in P\) and \(b\in R\). Again, since \(P\) is an ideal, \(ab\in P\). Hence, every vertex of \(A\) is adjacent to every vertex of
\(B\).

Finally, let \(b,b'\in B\). If \(bb'\in P\), then the primality of \(P\) implies
\(b\in P\) or \(b'\in P\), which contradicts \(b,b'\in R\setminus P\). Hence
\(bb'\notin P\), and so no two vertices of \(B\) are adjacent. Thus \(B\) is an
independent set. Therefore \(\Gamma_P(R)\) is the join of a complete graph on \(m-1\) vertices and
an edgeless graph on \(m(q-1)\) vertices:
$\Gamma_P(R)\cong K_{m-1}\vee \overline{K}_{m(q-1)}$.
This proves the assertion.
\end{proof}

\begin{remark}\label{rem:finite-field-case}
If \(P=(0)\), then \(m=|P|=1\). In this case \(R/P=R\) is a finite field, and $\Gamma_P(R)\cong \overline{K}_{q-1}$.
Thus, the associated edge ideal is the zero ideal. In the remaining sections, when we study vertex covers, primary decompositions, and powers of the edge ideal, we
will assume \(m\geq 2\), equivalently \(|P|\geq 2\), so that the graph has at least
one clique vertex.
\end{remark}

\begin{corollary}\label{cor:ring-parameter-restriction}
Let \(a,b\) be nonnegative integers with \(b\geq 1\). If a complete split graph $K_a\vee \overline{K}_b$
is isomorphic to \(\Gamma_P(R)\) for some finite commutative ring \(R\) and some
proper prime ideal \(P\), then $a=|P|-1$ and $b=(a+1)(q-1)$ for some prime power \(q\). In particular,
\[
q=\frac{b}{a+1}+1
\]
must be a prime power. Hence not every complete split graph is induced by a prime
ideal of a finite ring.
\end{corollary}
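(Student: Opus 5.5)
The plan is to deduce the corollary from Proposition~\ref{prop:ring-induced-split} by comparing graph invariants on both sides of an isomorphism. Suppose \(K_a\vee \overline{K}_b\cong \Gamma_P(R)\). Write \(m=|P|\) and \(q=|R/P|\), so that \(\Gamma_P(R)\cong K_{m-1}\vee \overline{K}_{m(q-1)}\) and \(q\) is a prime power. Counting vertices immediately gives \(a+b=mq-1\). We still need \(a=m-1\) and \(b=m(q-1)\) individually, so the parameters of the two complete split graphs must be matched. The tool I would use is the set of \emph{dominating vertices}, meaning vertices adjacent to all other vertices.

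\textbf{Step 1: dominating vertices in \(K_a\vee\overline{K}_b\).} Every clique vertex is dominating. An independent vertex is dominating only if \(b=1\), since with \(b\geq 2\) it misses the other independent vertices.

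\textbf{Step 2: the generic case \(b\geq 2\).} Here the number of dominating vertices is exactly \(a\). On the ring side, first suppose \(m(q-1)\geq 2\). Then the number of dominating vertices is \(m-1\), so \(a=m-1\), and vertex counting forces \(b=m(q-1)=(a+1)(q-1)\).

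\textbf{Step 3: the degenerate cases.} These are where the matching is not automatic, and they are the main obstacle. First, \(m(q-1)=1\) on the ring side forces \(m=1\) and \(q=2\), so \(\Gamma_P(R)\) is a single vertex. That contradicts \(a+b\geq b\geq 2\), so it does not occur in Step 2. Second, when \(b=1\) the graph \(K_a\vee\overline{K}_1\) is just \(K_{a+1}\), and any complete graph \(K_{m-1}\vee\overline{K}_{m(q-1)}\) must have \(m(q-1)\leq 1\). Since \(q\geq 2\), this gives \(m=1\) and \(q=2\), so \(a+1=1\), that is, \(a=0\). Then \(b=1=(0+1)(2-1)\), so the formula holds with \(q=2\) and \(a=|P|-1=0\). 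Thus in every case \(a=|P|-1\) and \(b=(a+1)(q-1)\) with \(q=|R/P|\) a prime power.

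\textbf{Step 4: conclusion.} Solving \(b=(a+1)(q-1)\) gives \(q=\tfrac{b}{a+1}+1\), which must therefore be a prime power. To show that not every complete split graph arises this way, I would exhibit a counterexample such as \(a=0\), \(b=5\). This would force \(q=6\), which is not a prime power, so \(\overline{K}_5\) is not a prime ideal graph. Other examples are \(a=1\), \(b=1\), which forces \(q=3/2\), not an integer, or \(a=1\), \(b=10\), which forces \(q=6\).
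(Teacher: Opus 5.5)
Your proposal is correct and follows the same route as the paper: invoke Proposition~\ref{prop:ring-induced-split} and match the parameters of the two complete split graphs. The paper simply reads off \(a=|P|-1\) and \(b=|P|(|R/P|-1)\) from the isomorphism without comment. Your dominating-vertex count and separate treatment of \(b=1\) justify that matching step (it relies on \(b\geq 1\) on both sides). Your example \(\overline{K}_5\) makes explicit the final claim that not every complete split graph arises, which the paper leaves implicit.
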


\begin{proof}
By Proposition~\ref{prop:ring-induced-split}, every prime ideal graph has the form
\[
\Gamma_P(R)\cong K_{|P|-1}\vee \overline{K}_{|P|(|R/P|-1)}.
\]
Thus, if \(K_a\vee \overline{K}_b\cong \Gamma_P(R)\), then $a=|P|-1$ and $b=|P|(|R/P|-1)=(a+1)(q-1)$, where \(q=|R/P|\). Since \(R/P\) is a finite field, \(q\) is a prime power. This
proves the claim.
\end{proof}

\begin{theorem}\label{thm:realization}
Let \(q\) be a prime power and let \(m\geq 1\) be an integer. Then there exists
a finite commutative ring \(R\) with identity and a proper prime ideal \(P\) of
\(R\) such that
\[
|P|=m
\qquad\text{and}\qquad
|R/P|=q.
\]
Consequently,
\[
\Gamma_P(R)\cong K_{m-1}\vee \overline{K}_{m(q-1)}.
\]
\end{theorem}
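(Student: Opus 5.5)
The plan is to give an explicit construction: take $R=\mathbb{F}_q\times T$ with $T$ a finite commutative ring of order $m$, and $P=\{0\}\times T$. Once $|P|=m$ and $|R/P|=q$ are checked, the graph isomorphism follows at once from Proposition~\ref{prop:ring-induced-split}.

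\textbf{Choice of ring.} Let $\mathbb{F}_q$ be the finite field with $q$ elements; it exists because $q$ is a prime power. Let $T=\mathbb{Z}/m\mathbb{Z}$, a finite commutative ring with identity of order $m$. When $m=1$, $T$ is the zero ring. Set $R=\mathbb{F}_q\times T$ with componentwise operations. This is a finite commutative ring with identity $(1,1)$.

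\textbf{Choice of ideal and primality.} Set $P=\{0\}\times T$. It is the kernel of the projection $\pi_1:R\to\mathbb{F}_q$, which is a surjective ring homomorphism preserving identities. Hence $P$ is an ideal and $R/P\cong\mathbb{F}_q$ by the first isomorphism theorem. Since $\mathbb{F}_q$ is a field, and in particular a nonzero integral domain, $P$ is a proper prime ideal. For a direct check: if $(u,s)(v,t)\in P$, then $uv=0$ in $\mathbb{F}_q$, so $u=0$ or $v=0$. Properness holds because $(1,0)\notin P$.

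\textbf{Counting and conclusion.} We have $|P|=|T|=m$ and $|R/P|=|\mathbb{F}_q|=q$. Applying Proposition~\ref{prop:ring-induced-split} then gives $\Gamma_P(R)\cong K_{m-1}\vee\overline{K}_{m(q-1)}$.

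There is no real obstacle here. The only points needing care are the degenerate case $m=1$, where $T$ is the zero ring, $R\cong\mathbb{F}_q$ and $P=(0)$, consistent with Remark~\ref{rem:finite-field-case}, and the fact that $\mathbb{F}_q$ exists for every prime power $q$. If one prefers to avoid the zero ring, the case $m=1$ can be handled separately by taking $R=\mathbb{F}_q$ and $P=(0)$ directly.
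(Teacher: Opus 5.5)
Your proposal is correct and takes the same route as the paper: the ring $\mathbb{F}_q\times\mathbb{Z}/m\mathbb{Z}$ with $P=\{0\}\times\mathbb{Z}/m\mathbb{Z}$, followed by an appeal to Proposition~\ref{prop:ring-induced-split}. The paper treats $m=1$ as a separate case with $R=\mathbb{F}_q$ and $P=(0)$; your uniform construction also covers it, since for $m=1$ the factor $\mathbb{Z}/1\mathbb{Z}$ is the zero ring and $R\cong\mathbb{F}_q$.
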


\begin{proof}
We consider two cases.
First suppose \(m=1\). Let $R=\mathbb{F}_q$ and $P=(0)$.
Then \(R\) is a finite field, so \(P=(0)\) is a proper prime ideal of \(R\). Moreover, $|P|=1=m$ and $R/P\cong \mathbb{F}_q$,
so \(|R/P|=q\).
Now suppose \(m\geq 2\). Let
\[
R=\mathbb{F}_q\times \mathbb{Z}/m\mathbb{Z}
\]
and let
\[
P=\{0\}\times \mathbb{Z}/m\mathbb{Z}.
\]
Then \(R\) is a finite commutative ring with identity, and \(P\) is a proper ideal of
\(R\). Also,
\[
|P|=|\mathbb{Z}/m\mathbb{Z}|=m.
\]
Furthermore, $R/P\cong \mathbb{F}_q$.
Since \(\mathbb{F}_q\) is a field, \(R/P\) is an integral domain. Hence \(P\) is a
prime ideal of \(R\), and $|R/P|=q$.
In both cases, we have constructed a finite commutative ring \(R\) and a proper
prime ideal \(P\) such that \(|P|=m\) and \(|R/P|=q\). Therefore, by
Proposition~\ref{prop:ring-induced-split},
\[
\Gamma_P(R)\cong K_{m-1}\vee \overline{K}_{m(q-1)}.
\]
This proves the theorem.
\end{proof}

\begin{corollary}\label{cor:classification}
A complete split graph \(K_a\vee \overline{K}_b\) with \(a\geq 0\) and \(b\geq 1\)
arises as a prime ideal graph of a finite commutative ring if and only if there exists
a prime power \(q\) such that $b=(a+1)(q-1)$. In that case, one may realize the graph by taking $R=\mathbb{F}_q\times \mathbb{Z}/(a+1)\mathbb{Z}$ and $P=\{0\}\times \mathbb{Z}/(a+1)\mathbb{Z}$.
\end{corollary}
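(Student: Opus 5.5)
The statement is an ``if and only if'', so I would prove the two directions separately, using Proposition~\ref{prop:ring-induced-split} and Theorem~\ref{thm:realization}. Both results are already established, so the work is mostly translation between the graph parameters \((a,b)\) and the ring parameters \((m,q)\).

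For the forward direction I would invoke Corollary~\ref{cor:ring-parameter-restriction} directly. Suppose \(K_a\vee\overline{K}_b\cong\Gamma_P(R)\). That corollary gives \(a=|P|-1\) and \(b=(a+1)(q-1)\) with \(q=|R/P|\) a prime power. One subtle point deserves a remark. A complete split graph \(K_a\vee\overline{K}_b\) does not by itself determine which vertices lie in the clique and which in the independent set. For example, when \(b=1\), \(K_a\vee\overline{K}_1\cong K_{a+1}\vee\overline{K}_0\). I would therefore read the isomorphism as one respecting the given split structure, as the corollary tacitly does. Alternatively, I would check that the parameters are recovered from the graph itself: the clique part consists of the vertices of degree equal to the total vertex count minus one, once \(b\geq 2\).

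The case \(b=1\) needs care. Then \(b=(a+1)(q-1)\) forces \(a=0\) and \(q=2\), so the graph is a single vertex, which is realized by \(R=\mathbb{F}_2\), \(P=(0)\). Any other complete graph \(K_{a+1}\) with \(b=1\) would also need checking. By Proposition~\ref{prop:ring-induced-split}, a complete prime ideal graph requires \(m(q-1)\leq 1\), hence \(m=1\) and \(q=2\). So the only complete prime ideal graph is \(K_1\), and the forward direction is consistent.

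For the converse, suppose \(b=(a+1)(q-1)\) with \(q\) a prime power, and set \(m=a+1\geq 1\). Theorem~\ref{thm:realization} provides a finite ring \(R\) and a prime ideal \(P\) with \(|P|=m\) and \(|R/P|=q\). Hence
\[
\Gamma_P(R)\cong K_{m-1}\vee\overline{K}_{m(q-1)}=K_a\vee\overline{K}_b .
\]
For the explicit realization, the proof of Theorem~\ref{thm:realization} uses \(\mathbb{F}_q\times\mathbb{Z}/m\mathbb{Z}\) when \(m\geq 2\). When \(m=1\) the same formula gives \(\mathbb{F}_q\times\mathbb{Z}/1\mathbb{Z}\cong\mathbb{F}_q\) with \(P=\{0\}\times 0=(0)\). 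So the stated realization holds uniformly, and I would note this isomorphism explicitly.

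The only genuine obstacle I expect is the well-definedness issue above: reading \((a,b)\) as intrinsic to the graph. I would handle it as described, either by a short remark that the isomorphism respects the split structure, or by recovering the split structure from degrees. Everything else is immediate from the earlier results.
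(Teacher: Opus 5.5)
Your proposal is correct and matches the paper's proof: necessity comes from Corollary~\ref{cor:ring-parameter-restriction}, and sufficiency from Theorem~\ref{thm:realization} with \(m=a+1\). Your extra checks are valid refinements of points the paper leaves tacit. First, \((a,b)\) is intrinsic to the graph once \(b\geq 1\), since the graph is complete iff \(b=1\), and otherwise \(a\) counts the dominating vertices. Second, \(\mathbb{F}_q\times\mathbb{Z}/1\mathbb{Z}\cong\mathbb{F}_q\), so the stated realization also covers \(a=0\).
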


\begin{proof}
The necessity follows from Corollary~\ref{cor:ring-parameter-restriction}. Conversely,
suppose that $b=(a+1)(q-1)$ for some prime power \(q\). Set \(m=a+1\). By Theorem~\ref{thm:realization}, there
exists a finite commutative ring \(R\) and a prime ideal \(P\) such that $|P|=m=a+1$, $|R/P|=q$.
Therefore,
$\Gamma_P(R)\cong K_{m-1}\vee \overline{K}_{m(q-1)}
=K_a\vee \overline{K}_b$.
This proves the equivalence.
\end{proof}
\begin{remark}
The classification contains several familiar graph families as special cases. If
\(|P|=2\), then
\[
\Gamma_P(R)\cong K_1\vee \overline{K}_{2(|R/P|-1)},
\]
which is a star graph. If \(|R/P|=2\), then
\[
\Gamma_P(R)\cong K_{|P|-1}\vee \overline{K}_{|P|}.
\]
Thus the ring-induced family includes stars and an infinite class of complete split
graphs determined by finite-field residue sizes.
\end{remark}

\begin{remark}\label{rem:scope-distinction}
The preceding classification shows that the graphs studied in this paper form a
ring-induced subfamily of complete split graphs whose parameters are governed by
\(|P|\) and \(|R/P|\). Thus, the focus of the paper is not the general theory of
complete split graphs, but rather the ordinary powers and fiber-cone
invariants of edge ideals arising from prime ideals of finite commutative rings.
This distinction will be used throughout the paper to express the algebraic
invariants in terms of finite-ring data.
\end{remark}

\section{Vertex covers and primary decomposition}

In this section, we determine the minimal vertex covers and the primary decomposition
of the edge ideal of a prime ideal graph. We keep the ring-theoretic notation from
the previous section. Thus \(R\) is a finite commutative ring with identity, \(P\) is
a proper prime ideal of \(R\), and $m=|P|$, $q=|R/P|$.

By Proposition~\ref{prop:ring-induced-split}, $\Gamma_P(R)\cong K_{m-1}\vee \overline{K}_{m(q-1)}$.
We set $a=m-1$, $b=m(q-1)$.
When \(m=1\), we have \(P=(0)\), the graph is edgeless, and its edge ideal is the
zero ideal. Hence, throughout the rest of the paper, whenever we discuss vertex
covers, primary decompositions, and powers of the edge ideal, we assume $m\geq 2$. Equivalently, \(a=m-1\geq 1\).

Let
\[
A=P\setminus\{0\}=\{u_1,\ldots,u_a\}\quad \text{and}\quad  B=R\setminus P=\{v_1,\ldots,v_b\}.
\]
We attach the variables \(x_1,\ldots,x_a\) to the vertices \(u_1,\ldots,u_a\) and
the variables \(y_1,\ldots,y_b\) to the vertices \(v_1,\ldots,v_b\). 

Let $S=k[x_1,\ldots,x_a,y_1,\ldots,y_b]$ be a polynomial ring over a field \(k\). By the structure of \(\Gamma_P(R)\), the
vertices in \(A\) form a clique, the vertices in \(B\) form an independent set, and
every vertex of \(A\) is adjacent to every vertex of \(B\). Therefore the edge ideal
of \(\Gamma_P(R)\) is
\[
I=I(\Gamma_P(R))
=
(x_ix_j,\;x_i y_\ell
\mid 1\leq i<j\leq a,\;1\leq i\leq a,\;1\leq \ell\leq b)
\subseteq S.
\]
Equivalently, if
\[
J=(x_ix_j\mid 1\leq i<j\leq a) \ \
\text{and} \ \
K=(x_i y_\ell\mid 1\leq i\leq a,\;1\leq \ell\leq b),
\]
then $I=J+K$.

\begin{proposition}\label{prop:minimal-vertex-covers-ring}
Assume \(m\geq 2\). The minimal vertex covers of \(\Gamma_P(R)\) are precisely $A$
and
\[
(A\setminus\{u_i\})\cup B,\qquad 1\leq i\leq a.
\]
Equivalently, \(\Gamma_P(R)\) has exactly \(a+1=m\) minimal vertex covers.
\end{proposition}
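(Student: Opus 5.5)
The plan is to argue directly from the structure of \(\Gamma_P(R)\) given by Proposition~\ref{prop:ring-induced-split}. The edges are the pairs \(\{u_i,u_j\}\) with \(i\neq j\) inside the clique \(A\), together with all pairs \(\{u_i,v_\ell\}\) between \(A\) and \(B\); there are no edges inside \(B\). The proof has three steps: show the listed sets are vertex covers, show they are minimal, and show every minimal vertex cover is one of them.

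\emph{The listed sets are covers.} The set \(A\) is a cover because every edge has at least one endpoint in \(A\). For a fixed \(i\), the set \(C_i=(A\setminus\{u_i\})\cup B\) covers every clique edge \(\{u_j,u_k\}\), since at most one of \(u_j,u_k\) equals \(u_i\). It covers every edge \(\{u_j,v_\ell\}\) because \(v_\ell\in C_i\).

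\emph{Minimality.} Removing any \(u_i\) from \(A\) leaves the edge \(\{u_i,v_1\}\) uncovered; here \(b=m(q-1)\geq 1\) because \(q\geq 2\). For \(C_i\), removing \(v_\ell\) uncovers the edge \(\{u_i,v_\ell\}\). Removing some \(u_j\) with \(j\neq i\) uncovers the clique edge \(\{u_i,u_j\}\).

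\emph{Completeness.} Let \(C\) be any vertex cover. There are two cases.
\begin{itemize}
\item If \(A\subseteq C\), then \(C\) contains the cover \(A\), so minimality forces \(C=A\).
\item Otherwise some \(u_i\notin C\). To cover the edges \(\{u_i,u_j\}\), every \(u_j\) with \(j\neq i\) must lie in \(C\). To cover the edges \(\{u_i,v_\ell\}\), every \(v_\ell\) must lie in \(C\). Hence \(C\supseteq C_i\), and minimality gives \(C=C_i\).
\end{itemize}
Note that in the second case at most one vertex of \(A\) is missing from \(C\), and the argument shows which one determines \(C\).

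\emph{Counting.} The covers \(A, C_1,\dots,C_a\) are pairwise distinct. Indeed, \(C_i\) contains \(B\ne\emptyset\) while \(A\) does not meet \(B\), and \(C_i\neq C_j\) for \(i\neq j\) because they differ in which element of \(A\) is omitted. This gives exactly \(a+1=m\) minimal covers.

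There is no real obstacle in this argument. The only point needing care is the use of \(b\geq1\), which follows from \(q\geq 2\) and is required both for the minimality of \(A\) and for the distinctness of \(A\) from the \(C_i\). The degenerate case \(a=1\) is also consistent: then \(C_1=B\), which is indeed a minimal cover of the star graph.
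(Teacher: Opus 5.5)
Your proof is correct and follows essentially the same direct verification as the paper. The only organizational difference is in the completeness step: you split on whether \(A\subseteq C\), and if some \(u_i\notin C\) you force all neighbours of \(u_i\) into \(C\); the paper instead splits on whether \(B\subseteq C\) and then argues that \(C\) misses exactly one clique vertex. You also make explicit two points the paper leaves implicit, namely \(b\geq 1\) (needed for minimality of \(A\)) and the pairwise distinctness of the \(a+1\) covers. One wording fix: the completeness step should begin with ``minimal vertex cover'' rather than ``any vertex cover''.
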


\begin{proof}
Since every edge of \(\Gamma_P(R)\) has at least one endpoint in \(A\), the set
\(A\) is a vertex cover. It is minimal because if \(u_i\in A\) is removed, then the
edge \(u_i v_1\) is no longer covered.

Now fix \(i\in\{1,\ldots,a\}\). We claim that
\[
(A\setminus\{u_i\})\cup B
\]
is a minimal vertex cover. Every edge inside the clique \(A\) has both endpoints in
\(A\), and hence every such edge is covered by \(A\setminus\{u_i\}\), except for
edges incident with \(u_i\). But if an edge has the form \(u_i u_j\), with \(j\neq i\),
then it is covered by \(u_j\in A\setminus\{u_i\}\). Every edge between \(A\) and
\(B\) is also covered: if it is incident with a vertex in \(A\setminus\{u_i\}\), then
it is covered by that vertex; if it is of the form \(u_i v_\ell\), then it is covered
by \(v_\ell\in B\). Thus \((A\setminus\{u_i\})\cup B\) is a vertex cover.
It remains to check minimality. If one removes a vertex \(u_j\in A\setminus\{u_i\}\),
then the edge \(u_i u_j\) is uncovered. If one removes a vertex \(v_\ell\in B\), then
the edge \(u_i v_\ell\) is uncovered. Hence the cover is minimal.

Conversely, let \(C\) be a minimal vertex cover of \(\Gamma_P(R)\). If there exists
a vertex \(v_\ell\in B\) such that \(v_\ell\notin C\), then every vertex of \(A\)
must belong to \(C\), because \(v_\ell\) is adjacent to every vertex of \(A\). Hence
\(A\subseteq C\). By minimality of \(C\), we get \(C=A\).
Now suppose \(B\subseteq C\). Since \(A\) is a clique, the cover \(C\) must contain
at least \(a-1\) vertices of \(A\). If it contained all vertices of \(A\), then \(C\)
would not be minimal because any vertex of \(B\) could be removed while the set
\(A\) would still cover all edges. Therefore \(C\) contains exactly \(a-1\) vertices
of \(A\). Hence
\[
C=(A\setminus\{u_i\})\cup B
\]
for some \(i\). This proves the claim.
\end{proof}

\begin{corollary}\label{cor:height-dimension}
Assume \(m\geq 2\). Then the height of \(I\) is $\operatorname{ht}(I)=a=m-1$. Consequently, $\dim(S/I)=b=m(q-1)$.
\end{corollary}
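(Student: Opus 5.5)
We derive the statement from Proposition~\ref{prop:minimal-vertex-covers-ring} together with the standard correspondence between minimal vertex covers of a graph and minimal primes of its edge ideal. Since $I$ is a squarefree monomial ideal, it is radical, and
\[
I=\bigcap_{C}\,(z : z\in C),
\]
where $C$ runs over the minimal vertex covers of $\Gamma_P(R)$ and $z$ denotes the variable attached to a vertex. The height of $I$ equals the minimum height of its minimal primes, which is the minimum cardinality of a minimal vertex cover.

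The key step is therefore to compare the sizes of the covers listed in Proposition~\ref{prop:minimal-vertex-covers-ring}.
\begin{itemize}
\item The cover $A$ gives the prime $(x_1,\ldots,x_a)$, of height $a$.
\item Each cover $(A\setminus\{u_i\})\cup B$ gives a prime of height $(a-1)+b$.
\end{itemize}
Here $b=m(q-1)\geq m=a+1$, because $q\geq 2$. Hence $a-1+b\geq 2a>a$ when $a\geq 1$; in fact $a-1+b\geq a$ already since $b\geq 1$. So the minimum is $a$, and $\operatorname{ht}(I)=a=m-1$.

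For the dimension, we use $\dim S/I=\dim S-\operatorname{ht}(I)$. This holds because $S$ is a polynomial ring, hence catenary and equidimensional, and it can also be seen directly: $\dim S/I$ is the maximum of $\dim S/\mathfrak p$ over the minimal primes $\mathfrak p$ of $I$, and each $\dim S/\mathfrak p$ is the number of variables not in $\mathfrak p$. This gives
\[
\dim S/I=(a+b)-a=b=m(q-1).
\]

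There is no real obstacle. The only points needing care are:
\begin{itemize}
\item invoking the standard fact that the minimal primes of an edge ideal are exactly the ideals generated by minimal vertex covers;
\item checking the inequality $a-1+b\geq a$, which uses $b\geq 1$ (true since $m\geq 2$ and $q\geq 2$), so that the cover $A$ indeed has minimum size.
\end{itemize}
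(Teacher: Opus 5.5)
Your proof is correct and follows the paper's argument essentially verbatim: you read off the two cover sizes $a$ and $a-1+b$ from Proposition~\ref{prop:minimal-vertex-covers-ring}, use $b\geq 1$ to see that the minimum is $a$, and conclude $\dim(S/I)=(a+b)-a=b$. Your extra justification of $\dim S/I=\dim S-\operatorname{ht}(I)$ through the minimal primes is a correct detail that the paper leaves implicit.
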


\begin{proof}
The height of a square-free edge ideal is the minimum cardinality of a vertex cover
of the corresponding graph. By Proposition~\ref{prop:minimal-vertex-covers-ring},
one minimal vertex cover is \(A\), which has cardinality $|A|=a=m-1$. The other minimal vertex covers have cardinality
\[|(A\setminus\{u_i\})\cup B|=a-1+b.
\]
Since \(b\geq 1\), we have $a\leq a-1+b$. Therefore \(\operatorname{ht}(I)=a=m-1\). Since \(S\) has \(a+b\) variables, we get $\dim(S/I)=(a+b)-a=b=m(q-1)$.
\end{proof}

\begin{corollary}\label{cor:primary-decomposition-ring}
Assume \(m\geq 2\). Let $\mathfrak{p}_0=(x_1,\ldots,x_a)$ and, for \(1\leq i\leq a\), let
\[
\mathfrak{p}_i=(x_1,\ldots,\widehat{x_i},\ldots,x_a,y_1,\ldots,y_b),
\]
where the hat means that \(x_i\) is omitted. Then $I=\mathfrak{p}_0\cap\mathfrak{p}_1\cap\cdots\cap\mathfrak{p}_a$. In particular, \(I\) is radical and $\operatorname{Ass}(S/I)=
\{\mathfrak{p}_0,\mathfrak{p}_1,\ldots,\mathfrak{p}_a\}$.
Thus \(S/I\) has exactly \(m=|P|\) minimal primes.
\end{corollary}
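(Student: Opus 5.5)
The plan is to use the standard correspondence between minimal vertex covers of a graph and minimal primes of its edge ideal. For a square-free monomial ideal $I(G)$ one has
\[
I(G)=\bigcap_{C}\,(z_v : v\in C),
\]
where $C$ ranges over the minimal vertex covers of $G$.

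I would verify the two inclusions directly rather than quote this, since the argument is short. For $I\subseteq\bigcap_i\mathfrak p_i$, check each generator. The monomial $x_ix_j$ lies in $\mathfrak p_0$, and it lies in every $\mathfrak p_k$ because at most one of $x_i,x_j$ equals $x_k$. The monomial $x_iy_\ell$ lies in $\mathfrak p_0$ and in every $\mathfrak p_k$ via $y_\ell$.

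For the reverse inclusion, the intersection of monomial prime ideals is a square-free monomial ideal, so it suffices to treat a square-free monomial $w$ lying in every $\mathfrak p_i$. Let $\operatorname{supp}(w)$ be its set of variables, and suppose $w\notin I$. Then $\operatorname{supp}(w)$ is an independent set of $\Gamma_P(R)$. By the clique/join structure from Proposition~\ref{prop:ring-induced-split}, this independent set is either a single $\{x_i\}$ or a subset of $\{y_1,\ldots,y_b\}$. If it is a subset of the $y$'s, then $w\notin\mathfrak p_0$. If it is $\{x_i\}$, then $w=x_i\notin\mathfrak p_i$. Either case is a contradiction, so $w\in I$. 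Equivalently, the complement of $\operatorname{supp}(w)$ contains a vertex cover and hence one of the minimal covers of Proposition~\ref{prop:minimal-vertex-covers-ring}; then $w$ avoids that prime.

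Radicality follows because $I$ is an intersection of primes, and each $\mathfrak p_i$ is prime, being generated by variables. For irredundance, I would show no $\mathfrak p_i$ contains another. $\mathfrak p_0\not\supseteq\mathfrak p_i$ since $b\ge1$ gives $y_1\in\mathfrak p_i\setminus\mathfrak p_0$. $\mathfrak p_i\not\supseteq\mathfrak p_0$ since $x_i\notin\mathfrak p_i$. For $i\ne j$, $x_j\in\mathfrak p_i\setminus\mathfrak p_j$. This mirrors the minimality of the distinct covers in Proposition~\ref{prop:minimal-vertex-covers-ring}.

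Hence the decomposition is an irredundant primary decomposition of a radical ideal. So $\operatorname{Ass}(S/I)$ equals the set of minimal primes, namely $\{\mathfrak p_0,\ldots,\mathfrak p_a\}$, and there are $a+1=m=|P|$ of them.

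The only delicate point is the reverse inclusion, which is handled by reducing to square-free monomials in the intersection.
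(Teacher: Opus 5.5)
Your proof is correct. It rests on the same underlying idea as the paper, namely that the minimal primes of an edge ideal are the primes of the minimal vertex covers, but you carry it out differently.

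\textbf{The paper's route.} It quotes the cover--prime correspondence and reads the primes off the complete list of minimal covers in Proposition~\ref{prop:minimal-vertex-covers-ring}. It asserts irredundance only because the covers are ``distinct and minimal.''

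\textbf{Your route.} You verify $I=\bigcap_i\mathfrak p_i$ by hand.
\begin{enumerate}
\item The inclusion $I\subseteq\bigcap_i\mathfrak p_i$ is checked on generators.
\item For the reverse inclusion, the support of a square-free monomial in the intersection but outside $I$ is an independent set of $K_a\vee\overline{K}_b$. Such a set is either $\{x_i\}$ or a subset of the $y$'s, so it is missed by $\mathfrak p_i$ or by $\mathfrak p_0$ respectively.
\item Irredundance comes from pairwise incomparability of the explicit primes. This uses the standard fact that a prime containing a finite intersection of ideals contains one of them.
\end{enumerate}

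\textbf{Comparison.} Your route is self-contained. It needs neither the general correspondence theorem nor the completeness half of Proposition~\ref{prop:minimal-vertex-covers-ring}, because classifying independent sets of the join is easier than classifying minimal covers. It also makes the irredundance claim explicit. The paper's route is shorter and shows that the corollary is a direct translation of the vertex-cover classification.
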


\begin{proof}
For a square-free monomial edge ideal, the minimal primes correspond exactly to the minimal vertex covers of the graph. By Proposition~\ref{prop:minimal-vertex-covers-ring}, the minimal vertex covers are $A$ and $(A\setminus\{u_i\})\cup B, \ 1\leq i\leq a$. The cover \(A\) corresponds to the monomial prime ideal $\mathfrak{p}_0=(x_1,\ldots,x_a)$. For each \(i\), the cover \((A\setminus\{u_i\})\cup B\) corresponds to $\mathfrak{p}_i=(x_1,\ldots,\widehat{x_i},\ldots,x_a,y_1,\ldots,y_b)$. Hence $I=\mathfrak{p}_0\cap\mathfrak{p}_1\cap\cdots\cap\mathfrak{p}_a$.
Since \(I\) is a square-free monomial ideal, it is radical. The displayed decomposition is irredundant because the corresponding vertex covers are distinct and minimal. Therefore, the associated primes of \(S/I\) are precisely $\mathfrak{p}_0,\mathfrak{p}_1,\ldots,\mathfrak{p}_a$. There are \(a+1=m\) such primes.
\end{proof}

\begin{remark}\label{rem:non-equidimensional}
If \(b\geq 2\), then \(S/I\) is not equidimensional. Indeed, $\operatorname{ht}(\mathfrak{p}_0)=a$, whereas
$\operatorname{ht}(\mathfrak{p}_i)=a-1+b$ for \(1\leq i\leq a\). Thus, the minimal primes have different heights whenever
\(b\geq 2\). In terms of the ring parameters, this means that \(S/I\) is not
equidimensional whenever $m(q-1)\geq 2$. Consequently, when \(b\geq 2\), the quotient \(S/I\) is not Cohen-Macaulay, since
a Cohen-Macaulay ring is equidimensional. This should be distinguished from the
special fiber ring \(\mathcal{F}(I)\), which is shown in Section~5 to be normal and
Cohen-Macaulay.

\end{remark}

\begin{remark}\label{rem:ring-interpretation-primary}
Corollary~\ref{cor:primary-decomposition-ring} expresses the primary decomposition
of the edge ideal entirely in terms of the prime ideal \(P\). The prime
\(\mathfrak{p}_0\) corresponds to the cover \(P\setminus\{0\}\), while the remaining
minimal primes correspond to removing one nonzero element of \(P\) and adding all
elements outside \(P\). Hence the number of minimal primes of \(I(\Gamma_P(R))\)
is exactly \(|P|\).
\end{remark} 

\section{Ordinary powers and minimal generators}
In this section, we describe the minimal monomial generators of every ordinary power
of the edge ideal of a prime ideal graph. We continue to use the notation $m=|P|$, $q=|R/P|$,
and $a=m-1$, $b=m(q-1)$.
Thus $\Gamma_P(R)\cong K_a\vee \overline{K}_b$.
We assume \(m\geq 2\), so \(a\geq 1\). Let $S=k[x_1,\ldots,x_a,y_1,\ldots,y_b]$,
\[
I=I(\Gamma_P(R))
=
(x_ix_j,\;x_i y_\ell
\mid 1\leq i<j\leq a,\;1\leq i\leq a,\;1\leq \ell\leq b).
\]
Write
\[
J=(x_ix_j\mid 1\leq i<j\leq a) \ \text{and} \ K=(x_i y_\ell\mid 1\leq i\leq a,\;1\leq \ell\leq b).
\]
Then $I=J+K$.

For multi-indices
\[
\alpha=(\alpha_1,\ldots,\alpha_a)\in \mathbb{N}^a,
\qquad
\beta=(\beta_1,\ldots,\beta_b)\in \mathbb{N}^b,
\]
we write
\[
x^\alpha=x_1^{\alpha_1}\cdots x_a^{\alpha_a},
\qquad
y^\beta=y_1^{\beta_1}\cdots y_b^{\beta_b},
\]
and
\[
|\alpha|=\alpha_1+\cdots+\alpha_a,
\qquad
|\beta|=\beta_1+\cdots+\beta_b.
\]

\begin{lemma}\label{lem:powers-clique-part}
For every integer \(r\geq 0\),
\[
G(J^r)=
\{x^\delta: |\delta|=2r,\;0\leq \delta_i\leq r\text{ for all }i=1,\ldots,a\},
\]
where \(J^0=S\) and \(G(J^0)=\{1\}\). If \(a=1\) and \(r\geq 1\), both sides are
empty.
\end{lemma}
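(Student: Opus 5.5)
We will prove the two inclusions directly. Note first that \(J\) is generated in degree \(2\), so \(J^r\) is generated in degree \(2r\). Every product of \(r\) generators of \(J\) has degree exactly \(2r\). Hence the minimal generators of \(J^r\) are exactly the distinct monomials of the form \(\prod_{t=1}^r x_{i_t}x_{j_t}\) with \(i_t<j_t\), because no such product can properly divide another of the same degree. The statement is therefore equivalent to the following combinatorial claim: a monomial \(x^\delta\) with \(|\delta|=2r\) is a product of \(r\) square-free quadratic monomials \(x_ix_j\) (\(i\neq j\)) if and only if \(\delta_i\leq r\) for all \(i\). The case \(r=0\) is trivial, and the case \(a=1\), \(r\geq 1\) is immediate: \(J=0\), and the right-hand side is empty since \(\delta_1=2r>r\).

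For the necessity direction, take a product of \(r\) edges \(x_{i_t}x_{j_t}\) with \(i_t\neq j_t\). Each factor contributes at most \(1\) to the exponent of a fixed variable \(x_i\). Hence \(\delta_i\leq r\), and the total degree is \(2r\).

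For sufficiency, which is the main step, we show by induction on \(r\) that every \(\delta\in\mathbb{N}^a\) with \(|\delta|=2r\) and \(\max_i\delta_i\leq r\) is the degree vector of a multigraph on \([a]\) without loops and with \(r\) edges. Given such \(\delta\) with \(r\geq 1\), choose indices \(i\neq j\) with the two largest entries \(\delta_i\geq\delta_j\geq\delta_k\) for all other \(k\). We have \(\delta_j\geq 1\): otherwise only \(\delta_i\) is nonzero, forcing \(\delta_i=2r>r\). Set \(\delta'=\delta-e_i-e_j\), so \(|\delta'|=2(r-1)\).

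The key step is to verify \(\delta'_k\leq r-1\) for all \(k\). This is automatic for \(k=i,j\). For any other \(k\), suppose \(\delta_k=r\). Then \(\delta_i,\delta_j\geq r\), so \(|\delta|\geq 3r>2r\), a contradiction. Hence \(\delta_k\leq r-1\). By induction \(x^{\delta'}\in J^{r-1}\), and so \(x^\delta=x_ix_j\,x^{\delta'}\in J^r\).

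Combining both directions with the degree observation gives the stated description of \(G(J^r)\). The only delicate point is the greedy choice of the two largest exponents, which preserves the bound \(\delta_k\leq r-1\).
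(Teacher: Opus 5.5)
Your proof is correct and follows essentially the same route as the paper. Both realize $\delta$ as the degree sequence of a loopless multigraph with $r$ edges by peeling off one factor $x_ix_j$ at a time, and both get minimality from the fact that $J^r$ is generated in degree $2r$; your choice of the two \emph{largest} entries is actually a needed refinement, since the paper only says to pick any two indices with positive remaining multiplicity, which does not preserve the invariant $\max_k \delta_k \le |\delta|/2$ (for $\delta=(2,1,1)$, $r=2$, removing $e_2+e_3$ leaves $(2,0,0)$, which cannot be paired).
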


\begin{proof}
For \(r=0\), the assertion is clear. Assume \(r\geq 1\). If \(a=1\), then \(J=0\),
and there is no vector \(\delta\in \mathbb{N}\) satisfying
\[
|\delta|=2r
\quad\text{and}\quad
\delta_1\leq r.
\]
Thus, both sides are empty.
Now assume \(a\geq 2\). Every generator of \(J^r\) is a product of \(r\) quadratic
monomials \(x_i x_j\) with \(i\neq j\). Hence, every minimal generator has total
degree \(2r\). Moreover, a fixed variable \(x_i\) can appear at most once in each
quadratic factor, so its exponent is at most \(r\). Therefore, every monomial in
\(G(J^r)\) satisfies
\[
|\delta|=2r,\qquad 0\leq \delta_i\leq r\quad \text{for all}\quad i.
\]

Conversely, let \(x^\delta\) be a monomial satisfying
$|\delta|=2r$ and $0\leq \delta_i\leq r$
for all \(i\). We show that \(x^\delta\in J^r\). The vector \(\delta\) may be viewed
as a degree sequence on the vertex set \(\{1,\ldots,a\}\). Since the total degree is
\(2r\) and no entry exceeds \(r\), one can pair the multiset containing \(\delta_i\)
copies of \(i\) into \(r\) unordered pairs of distinct indices. Indeed, at each step, choose two indices with positive remaining multiplicity. This is always possible
because no remaining multiplicity can exceed half of the remaining total. Each pair
\(\{i,j\}\) corresponds to the edge monomial \(x_i x_j\) of the complete graph on
the \(x\)-vertices. Multiplying the \(r\) corresponding edge monomials gives
\(x^\delta\). Hence \(x^\delta\in J^r\). Since \(J^r\) is generated in degree \(2r\), no monomial of degree \(2r\) properly
divides another monomial of degree \(2r\). Therefore every such \(x^\delta\) is a
minimal generator of \(J^r\). This proves the claim.
\end{proof}

\begin{lemma}\label{lem:powers-cross-part}
For every integer \(s\geq 0\), $G(K^s)=
\{x^\gamma y^\beta: |\gamma|=s,\;|\beta|=s\}$, where \(K^0=S\) and \(G(K^0)=\{1\}\).
\end{lemma}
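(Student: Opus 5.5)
The plan is to prove the two inclusions directly, in the same way as Lemma~\ref{lem:powers-clique-part}. The case \(s=0\) is the convention \(G(K^0)=\{1\}\), so assume \(s\geq 1\). The key observation is that \(K\) is the product ideal
\[
K=(x_1,\ldots,x_a)\cdot(y_1,\ldots,y_b)=\mathfrak{p}_0\,\mathfrak{n},\qquad \mathfrak{n}=(y_1,\ldots,y_b).
\]
Since monomial ideals form a commutative monoid under multiplication, this gives \(K^s=\mathfrak{p}_0^s\,\mathfrak{n}^s\).

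\textbf{Containment \(\subseteq\).} First, \(K^s\) is generated by products of \(s\) monomials of the form \(x_iy_\ell\). Each such product has the form \(x^\gamma y^\beta\) with exactly one \(x\)-factor and one \(y\)-factor contributed per term, so \(|\gamma|=s\) and \(|\beta|=s\). Hence every element of the generating set produced by multiplying generators lies in the right-hand side. Since \(G(K^s)\) is contained in any monomial generating set of \(K^s\), we get \(G(K^s)\) inside the right-hand side.

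\textbf{Containment \(\supseteq\).} Given \(x^\gamma y^\beta\) with \(|\gamma|=|\beta|=s\), list the \(x\)-variables with multiplicity as \(x_{i_1},\ldots,x_{i_s}\) and the \(y\)-variables as \(y_{\ell_1},\ldots,y_{\ell_s}\). Then
\[
x^\gamma y^\beta=\prod_{t=1}^s x_{i_t}y_{\ell_t}\in K^s.
\]
Unlike the clique case, no exponent bound is needed: the pairing is between two disjoint alphabets, so any matching of the two lists works. This is the only substantive step, and it is immediate.

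\textbf{Minimality.} All these monomials have the same total degree \(2s\), and \(K^s\) is generated in degree \(2s\). A monomial of degree \(2s\) therefore cannot be properly divisible by another element of \(K^s\). So every such monomial lies in \(G(K^s)\), and equality follows.

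I expect no real obstacle. The only point needing care is stating clearly that \(G(\cdot)\) of an equigenerated monomial ideal equals the set of all monomials of that degree lying in the ideal. That is what makes the minimality step immediate.
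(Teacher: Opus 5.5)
Your proof is correct and follows essentially the same route as the paper. Both arguments rest on the factorization \(K=(x_1,\ldots,x_a)(y_1,\ldots,y_b)\), which gives \(K^s=(x_1,\ldots,x_a)^s(y_1,\ldots,y_b)^s\). The paper reads the minimal generators off directly from the two powers. You instead verify the two inclusions and the equal-degree minimality step explicitly, which the paper's ``hence'' leaves implicit.
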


\begin{proof}
For \(s=0\), the statement is clear. Since
$K=(x_1,\ldots,x_a)(y_1,\ldots,y_b)$, we have $K^s=(x_1,\ldots,x_a)^s(y_1,\ldots,y_b)^s$.
The minimal monomial generators of \((x_1,\ldots,x_a)^s\) are precisely the monomials of degree \(s\) in the variables \(x_1,\ldots,x_a\). Similarly, the minimal monomial generators of \((y_1,\ldots,y_b)^s\) are precisely the monomials of degree \(s\) in the variables \(y_1,\ldots,y_b\). Hence the minimal generators of \(K^s\) are exactly the monomials $x^\gamma y^\beta$ with $|\gamma|=s$, $|\beta|=s$.
\end{proof}

\begin{theorem}\label{thm:ordinary-power-generators}
Let \(I=I(\Gamma_P(R))\). For every integer \(n\geq 1\),
\[
G(I^n)=
\left\{
x^\alpha y^\beta:
|\alpha|+|\beta|=2n,\;
|\beta|\leq n,\;
0\leq \alpha_i\leq n\text{ for all }i=1,\ldots,a
\right\}.
\]
Equivalently, in ring-theoretic parameters,
\[a=|P|-1,\qquad b=|P|(|R/P|-1).
\]
\end{theorem}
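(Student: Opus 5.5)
We use the decomposition $I=J+K$ together with Lemmas~\ref{lem:powers-clique-part} and \ref{lem:powers-cross-part}. Since $I^n=\sum_{r+s=n}J^rK^s$, every monomial in $I^n$ is divisible by a product $x^\delta\cdot x^\gamma y^\beta$ with $x^\delta\in G(J^r)$, $x^\gamma y^\beta\in G(K^s)$ and $r+s=n$. Let $\mathcal{M}$ denote the set on the right-hand side of the theorem. Because $I$ is generated in degree $2$, $I^n$ is generated in degree $2n$. Distinct monomials of the same degree never divide each other, so $G(I^n)$ is exactly the set of degree-$2n$ monomials lying in $I^n$. The proof therefore reduces to showing that a degree-$2n$ monomial lies in $I^n$ if and only if it lies in $\mathcal{M}$.

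\emph{Necessity.} Let $x^\alpha y^\beta=x^\delta x^\gamma y^\beta$ with $|\delta|=2r$, $\delta_i\le r$, $|\gamma|=s=|\beta|$ and $r+s=n$. Then $|\beta|=s\le n$. Moreover, $\alpha_i=\delta_i+\gamma_i\le r+s=n$. The same bound can be seen more simply: each of the $n$ edge factors contributes at most one copy of $x_i$, because both types of edge, $x_ix_j$ with $i\ne j$ and $x_iy_\ell$, are squarefree in the $x$'s.

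\emph{Sufficiency.} This is the main step. Take $x^\alpha y^\beta$ with $|\alpha|+|\beta|=2n$, $s:=|\beta|\le n$ and $\alpha_i\le n$. Set $r=n-s$, so that $|\alpha|=2n-s=2r+s$. We must split $\alpha=\delta+\gamma$ with $|\gamma|=s$, $|\delta|=2r$ and $\delta_i\le r$. Then Lemma~\ref{lem:powers-clique-part} gives $x^\delta\in J^r$ and Lemma~\ref{lem:powers-cross-part} gives $x^\gamma y^\beta\in K^s$, so the monomial lies in $J^rK^s\subseteq I^n$.

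To build $\gamma$, we remove $s$ units from $\alpha$ greedily, each time from a coordinate currently attaining the maximum. We must check that afterwards $\max_i\delta_i\le r$. Let $t_i=\max(\alpha_i-r,0)$ be the excess of coordinate $i$ over $r$; then $t_i\le n-r=s$. Removing exactly $t_i$ from each coordinate needs $\sum t_i\le s$ units. Suppose two coordinates $i\ne j$ exceed $r$. Then
\[
2r+s=|\alpha|\ge\alpha_i+\alpha_j=2r+t_i+t_j,
\]
so $\sum_k t_k\le s$. If only one coordinate exceeds $r$, then $\sum_k t_k=t_i\le s$ directly. In either case the required excess removal uses at most $s$ units. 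The remaining units can be removed from any positive coordinates, which exist since $|\alpha|\ge s$, while keeping every $\delta_i\le r$ and $\delta\ge 0$.

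The degenerate cases need a separate check. When $a=1$, the only possibility is $r=0$, which forces $s=n$. This is consistent with the bound: $\alpha_1\le n$ and $\alpha_1=2n-s$ together give $s\ge n$. When $r=0$, we simply take $\gamma=\alpha$. The expected obstacle is exactly this excess-counting argument, ensuring that the sum constraint $|\alpha|=2r+s$ prevents more than $s$ units of total excess.
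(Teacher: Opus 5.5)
Your proposal is correct and follows essentially the same route as the paper. Necessity comes from the per-edge-factor bounds. For sufficiency you split $x^\alpha=x^\delta x^\gamma$ with $|\delta|=2r$, $\delta_i\le r$, $|\gamma|=s$, use the same two-case argument to bound the total excess $\sum_i(\alpha_i-r)_+$ by $s$, and finish with Lemmas~\ref{lem:powers-clique-part} and \ref{lem:powers-cross-part}. Your explicit observation that $a=1$ forces $r=0$ is a harmless addition that the paper's argument covers implicitly.
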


\begin{proof}
First let \(x^\alpha y^\beta\in G(I^n)\). Since \(I\) is generated by quadratic
monomials, every minimal generator of \(I^n\) has total degree \(2n\). Hence $|\alpha|+|\beta|=2n$.
Moreover, a \(y\)-variable can appear only in an edge of the form \(x_i y_\ell\).
Each of the \(n\) edge factors contributes at most one \(y\)-variable. Therefore $|\beta|\leq n$. Similarly, a fixed \(x_i\) can appear at most once in each edge factor, so $\alpha_i\leq n$ for every \(i\). Thus every minimal generator of \(I^n\) satisfies the stated
conditions.

Conversely, suppose that \(x^\alpha y^\beta\) satisfies
\[
|\alpha|+|\beta|=2n,\qquad |\beta|\leq n,\qquad \alpha_i\leq n
\]
for all \(i\). Set $s=|\beta|,\qquad r=n-s$. Then \(0\leq s\leq n\) and $|\alpha|=2n-s=2r+s$. We will split \(x^\alpha\) as $x^\alpha=x^\delta x^\gamma$ with $|\delta|=2r,\quad 0\leq \delta_i\leq r$, and $|\gamma|=s$. Then Lemma~\ref{lem:powers-clique-part} gives \(x^\delta\in J^r\), and
Lemma~\ref{lem:powers-cross-part} gives \(x^\gamma y^\beta\in K^s\). Hence $x^\alpha y^\beta=x^\delta x^\gamma y^\beta\in J^rK^s\subseteq (J+K)^n=I^n$.
It remains to construct such a vector \(\gamma\). We need $0\leq \gamma_i\leq \alpha_i$, $|\gamma|=s$, and
$\delta_i=\alpha_i-\gamma_i\leq r$ for every \(i\). Equivalently, $\gamma_i\geq \alpha_i-r$ for every \(i\). Define $\ell_i=\max\{0,\alpha_i-r\}$. We claim that $\ell_1+\cdots+\ell_a\leq s$. Indeed, if at least two coordinates of \(\alpha\) exceed \(r\), the inequality is immediate from \(|\alpha|=2r+s\). If only one coordinate exceeds \(r\), then the inequality follows from \(\alpha_i\leq n=r+s\).
Moreover,
\[
\sum_{i=1}^a(\alpha_i-r)_+
\leq
\sum_{i=1}^a \alpha_i-2r
=
(2r+s)-2r=s,
\]
where \((u)_+=\max\{u,0\}\). Thus $\sum_i \ell_i\leq s$. Choose nonnegative integers \(t_i\) such that $0\leq t_i\leq \alpha_i-\ell_i$ and
\[
\sum_{i=1}^a t_i=s-\sum_{i=1}^a \ell_i.
\]
This is possible because
\[
\sum_{i=1}^a(\alpha_i-\ell_i)
=
|\alpha|-\sum_{i=1}^a\ell_i
\geq
s-\sum_{i=1}^a\ell_i.
\]
Now set $\gamma_i=\ell_i+t_i$. Then $|\gamma|=s$ and \(0\leq \gamma_i\leq \alpha_i\). Also,
$\gamma_i\geq \ell_i\geq \alpha_i-r$, so
$\delta_i=\alpha_i-\gamma_i\leq r$. Furthermore,
$|\delta|=|\alpha|-|\gamma|=(2r+s)-s=2r$.
Therefore \(x^\delta\in J^r\) and \(x^\gamma y^\beta\in K^s\), and so $x^\alpha y^\beta\in I^n$.
Since all monomials satisfying the stated conditions have degree \(2n\), they are
minimal generators of \(I^n\). This completes the proof.
\end{proof}

\begin{example}\label{ex:decomposition-theorem43}
Let \(a=3\) and \(n=2\). Consider the monomial $u=x_1^2x_2y_1$.
Here
\[
\alpha=(2,1,0),\qquad \beta=(1,0,\ldots,0),
\]
so
\[
|\alpha|+|\beta|=3+1=4=2n,\qquad |\beta|=1\leq n,
\]
and each \(\alpha_i\leq n\). Thus \(u\) satisfies the conditions of
Theorem~\ref{thm:ordinary-power-generators}. In the proof, we have
\[
s=|\beta|=1,\qquad r=n-s=1.
\]
We may choose
\[
\delta=(1,1,0),\qquad \gamma=(1,0,0),
\]
so that
\[
\alpha=\delta+\gamma,\qquad |\delta|=2r=2,\qquad |\gamma|=s=1.
\]
Then
\[
x^\delta=x_1x_2\in J
\]
and
\[
x^\gamma y^\beta=x_1y_1\in K.
\]
Therefore
\[
u=x_1^2x_2y_1=(x_1x_2)(x_1y_1)\in JK\subseteq I^2.
\]
This illustrates the splitting \(x^\alpha=x^\delta x^\gamma\) used in the converse
direction of Theorem~\ref{thm:ordinary-power-generators}.
\end{example}

\begin{corollary}\label{cor:star-case-ring}
If \(|P|=2\), then \(a=1\) and $\Gamma_P(R)\cong K_1\vee \overline{K}_{2(q-1)}$. In this case
$I=(xy_1,\ldots,xy_b)$, $b=2(q-1)$, and for every \(n\geq 1\), 
\[G(I^n)=\{x^n y^\beta: |\beta|=n\}.\]
Consequently,
\[
\mu(I^n)=\binom{n+b-1}{b-1}
=
\binom{n+2(q-1)-1}{2(q-1)-1}.
\]
\end{corollary}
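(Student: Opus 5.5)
Since \(|P|=2\), Proposition~\ref{prop:ring-induced-split} gives \(a=|P|-1=1\) and \(b=|P|(q-1)=2(q-1)\), so \(\Gamma_P(R)\cong K_1\vee\overline{K}_{2(q-1)}\), a star. With the single clique variable \(x=x_1\), the clique ideal is \(J=0\) (there are no pairs \(i<j\le 1\)), so \(I=K=(xy_1,\ldots,xy_b)\). This settles the first two assertions.

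For the generators of \(I^n\) I will specialize Theorem~\ref{thm:ordinary-power-generators} to \(a=1\). A generator \(x^{\alpha_1}y^\beta\) must satisfy \(\alpha_1+|\beta|=2n\), \(|\beta|\le n\), and \(\alpha_1\le n\). The last inequality gives \(|\beta|=2n-\alpha_1\ge n\), and together with \(|\beta|\le n\) this forces \(|\beta|=n\) and \(\alpha_1=n\). Hence \(G(I^n)=\{x^ny^\beta:|\beta|=n\}\). As an independent check, \(I=x\cdot(y_1,\ldots,y_b)\), so \(I^n=x^n(y_1,\ldots,y_b)^n\). This is also the case \(a=1\) of Lemma~\ref{lem:powers-cross-part}.

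Counting is then routine. The map \(x^ny^\beta\mapsto y^\beta\) is a bijection from \(G(I^n)\) onto the monomials of degree \(n\) in \(b\) variables. By stars and bars there are \(\binom{n+b-1}{b-1}\) of these. Substituting \(b=2(q-1)\) gives the displayed formula.

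The only point needing care is the reduction \(\alpha_1=n\), which uses \(\alpha_1\le n\) and \(|\beta|\le n\) simultaneously; beyond that there is no real obstacle.
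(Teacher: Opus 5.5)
Your proposal is correct and matches the paper's proof essentially step for step. You specialize Theorem~\ref{thm:ordinary-power-generators} to $a=1$, combine $\alpha_1\le n$, $|\beta|\le n$ and $\alpha_1+|\beta|=2n$ to force $\alpha_1=|\beta|=n$, and then count degree-$n$ monomials in $b$ variables by stars and bars. Your extra check, $I=x\,(y_1,\ldots,y_b)$ and hence $I^n=x^n(y_1,\ldots,y_b)^n$, is a self-contained confirmation that does not depend on the general theorem.
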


\begin{proof}
If \(|P|=2\), then \(a=|P|-1=1\). Hence there is only one \(x\)-variable, say
\(x\). By Theorem~\ref{thm:ordinary-power-generators}, a monomial \(x^\alpha y^\beta\in G(I^n)\) must satisfy $\alpha+|\beta|=2n$, $|\beta|\leq n$, $\alpha\leq n$.
These conditions force \(\alpha=n\) and \(|\beta|=n\). Therefore $G(I^n)=\{x^n y^\beta:|\beta|=n\}$. The number of monomials of degree \(n\) in \(b\) variables is
\[
\binom{n+b-1}{b-1}.
\]
Since \(b=2(q-1)\), the displayed formula follows.
\end{proof}

\begin{theorem}\label{thm:number-generators}
For every integer \(n\geq 1\), the number of minimal monomial generators of
\(I^n\) is
\[
\mu(I^n)
=
\sum_{s=0}^{n}
\binom{s+b-1}{b-1}
\left[
\binom{2n-s+a-1}{a-1}
-
a\binom{n-s+a-2}{a-1}
\right].
\]
Equivalently, using \(a=|P|-1\) and \(b=|P|(|R/P|-1)\), one has
\[
\mu(I^n)
=
\sum_{s=0}^{n}
\binom{s+|P|(|R/P|-1)-1}{|P|(|R/P|-1)-1}
\left[
\binom{2n-s+|P|-2}{|P|-2}
-
(|P|-1)
\binom{n-s+|P|-3}{|P|-2}
\right],
\]
with the convention that \(\binom{u}{v}=0\) whenever \(u<v\) or \(v<0\).
\end{theorem}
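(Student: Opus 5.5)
By Theorem~\ref{thm:ordinary-power-generators}, $\mu(I^n)$ equals the number of pairs $(\alpha,\beta)\in\mathbb{N}^a\times\mathbb{N}^b$ with $|\alpha|+|\beta|=2n$, $|\beta|\le n$ and $\alpha_i\le n$ for all $i$. The plan is to count these pairs by conditioning on $s=|\beta|$. For fixed $s\in\{0,\ldots,n\}$, the $y$-part and the $x$-part are chosen independently. The number of $\beta$ with $|\beta|=s$ is $\binom{s+b-1}{b-1}$. It remains to count the $\alpha\in\mathbb{N}^a$ with $|\alpha|=2n-s$ and every $\alpha_i\le n$. Summing over $s$ then gives the displayed formula, provided this count equals the bracketed expression.

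To count the vectors $\alpha$, I would use complementary counting. Without the upper bounds there are $\binom{2n-s+a-1}{a-1}$ vectors with $|\alpha|=2n-s$. A vector is bad if some coordinate satisfies $\alpha_i\ge n+1$. The key observation is that at most one coordinate can be bad: two bad coordinates would force $|\alpha|\ge 2n+2>2n-s$. Hence inclusion--exclusion stops after its first term, and the bad vectors form a disjoint union over $i$. For a fixed $i$, substitute $\alpha_i'=\alpha_i-(n+1)$. This identifies the bad vectors at coordinate $i$ with vectors in $\mathbb{N}^a$ of total $n-s-1$, of which there are $\binom{n-s-1+a-1}{a-1}=\binom{n-s+a-2}{a-1}$. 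Subtracting $a$ times this count gives exactly the bracket.

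The main obstacle is checking that the boundary cases agree with the stated convention $\binom{u}{v}=0$ for $u<v$ or $v<0$.
\begin{itemize}
\item When $s=n$, there are no bad vectors, and indeed $\binom{a-2}{a-1}=0$.
\item When $a=1$, the bracket reads $\binom{2n-s}{0}-\binom{n-s-1}{0}$. For $s<n$ this is $1-1=0$, which is correct since $\alpha_1=2n-s>n$. For $s=n$ we get $1-\binom{-1}{0}$, and the convention gives $\binom{-1}{0}=0$, since $u<v$. So the count is $1$, matching Corollary~\ref{cor:star-case-ring}.
\end{itemize}
I would check these cases explicitly and note that the stars-and-bars count $\binom{N+a-1}{a-1}$ is valid for $N\ge 0$ and $a\ge1$.

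Finally, the second displayed formula follows by substituting $a=|P|-1$ and $b=|P|(|R/P|-1)$.
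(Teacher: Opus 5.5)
Your proposal is correct and follows the paper's proof essentially step for step. You fix $s=|\beta|$, count the $\beta$'s by stars and bars, and count the admissible $\alpha$'s by subtracting the $a$ pairwise disjoint families with some $\alpha_i\geq n+1$ via the shift $\alpha_i'=\alpha_i-(n+1)$. Your explicit check of the boundary cases $s=n$ and $a=1$ against the convention $\binom{u}{v}=0$ for $u<v$ is a small but useful addition that the paper leaves implicit.
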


\begin{proof}
By Theorem~\ref{thm:ordinary-power-generators}, a minimal generator of \(I^n\) has
the form \(x^\alpha y^\beta\) satisfying
\[
|\alpha|+|\beta|=2n,\qquad |\beta|\leq n,\qquad \alpha_i\leq n
\]
for every \(i\). Fix $s=|\beta|$. Then \(0\leq s\leq n\). The number of possible \(\beta\)'s with \(|\beta|=s\) is
the number of monomials of degree \(s\) in \(b\) variables, namely
\[
\binom{s+b-1}{b-1}.
\]
For a fixed \(s\), we must count the number of vectors $\alpha=(\alpha_1,\ldots,\alpha_a)\in\mathbb{N}^a$ such that $|\alpha|=2n-s$ and $\alpha_i\leq n$ for all \(i\). Without the upper bounds \(\alpha_i\leq n\), the number of
nonnegative solutions of \(|\alpha|=2n-s\) is
\[
\binom{2n-s+a-1}{a-1}.
\]
We subtract the forbidden solutions for which some \(\alpha_i\geq n+1\). Since $2n-s\leq 2n$, at most one component of \(\alpha\) can be at least \(n+1\). Thus, there is no overlap among the forbidden cases. Choose the index \(i\) with \(\alpha_i\geq n+1\).
After replacing $\alpha_i'=\alpha_i-(n+1)$,
we must solve
\[
\alpha_1+\cdots+\alpha_i'+\cdots+\alpha_a
=
2n-s-(n+1)
=
n-s-1.
\]
The number of such solutions is
\[
\binom{n-s-1+a-1}{a-1}
=
\binom{n-s+a-2}{a-1}.
\]
There are \(a\) choices for the index \(i\). Hence the number of admissible
\(\alpha\)'s is
\[
\binom{2n-s+a-1}{a-1}
-
a\binom{n-s+a-2}{a-1}.
\]
Multiplying this by the number of possible \(\beta\)'s and summing over
\(s=0,\ldots,n\) gives
\[
\mu(I^n)
=
\sum_{s=0}^{n}
\binom{s+b-1}{b-1}
\left[
\binom{2n-s+a-1}{a-1}
-
a\binom{n-s+a-2}{a-1}
\right].
\]
Finally, substituting $a=|P|-1$, $b=|P|(|R/P|-1)$ gives the ring-theoretic form of the formula.
\end{proof}

\begin{remark}\label{rem:ordinary-power-ring-data}
Theorem~\ref{thm:number-generators} shows that the ordinary-power sequence $\{\mu(I^n)\}_{n\geq 1}$ is determined entirely by the two finite-ring parameters \(|P|\) and \(|R/P|\).
Thus, if two pairs \((R,P)\) and \((R',P')\) satisfy $|P|=|P'|$ and 
$|R/P|=|R'/P'|$, then the edge ideals of their prime ideal graphs have the same ordinary-power
generator counts.
\end{remark}

\section{The special fiber ring and analytic spread}

In this section, we study the special fiber ring of the edge ideal of a prime ideal
graph. The formula for the number of minimal generators of $I^n$ obtained in Theorem~\ref{thm:number-generators} has a natural interpretation as the Hilbert function of the special fiber ring. We also compute the analytic spread and record structural consequences for the fiber cone.

\begin{proposition}\label{prop:fiber-ring-edge-ring}
Let \(\mathfrak m=(x_1,\ldots,x_a,y_1,\ldots,y_b)\). The special fiber ring $\mathcal{F}(I)$ is isomorphic to the edge ring
\[
\mathcal{F}(I)
\cong
k[x_i x_j,\;x_i y_\ell
\mid 1\leq i<j\leq a,\;1\leq i\leq a,\;1\leq \ell\leq b].
\]
In particular, $\mathcal{F}(I)$ is an affine semigroup ring generated by the
exponent vectors of the edge monomials of $\Gamma_P(R)$.
\end{proposition}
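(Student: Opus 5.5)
The argument rests on the standard fact that if an ideal $I$ is generated by forms $f_1,\dots,f_N$ of one degree $d$, then $\mathcal{F}(I)\cong k[f_1,\dots,f_N]$. I will prove this directly for our $I$, which is generated in degree $2$, and then identify the right-hand side with the monomial subring of $S$ generated by the edge monomials.

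\textbf{Step 1: the graded pieces of $\mathcal{F}(I)$.} Since $I$ is generated by quadrics, $I^n$ is generated in degree $2n$. Hence $I^n$ consists of elements of degree $\geq 2n$, and
\[
\mathfrak m I^n = I^n\cap \bigoplus_{t>2n} S_t .
\]
One inclusion is clear. For the other, a monomial of $I^n$ of degree $>2n$ is divisible by a generator of degree $2n$, and the quotient has positive degree, so the monomial lies in $\mathfrak m I^n$. Because $I^n$ is monomial, this gives an isomorphism of $k$-vector spaces
\[
I^n/\mathfrak m I^n \cong (I^n)_{2n},
\]
and $(I^n)_{2n}$ is exactly the $k$-span of the products of $n$ edge monomials.

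\textbf{Step 2: the ring structure.} Define $\phi:\mathcal{F}(I)\to S$ by sending the class of $f\in I^n$ to its degree-$2n$ component. This map is additive, and it is multiplicative because multiplication $I^n\times I^{n'}\to I^{n+n'}$ adds degrees. It is also compatible with the multiplication on $\mathcal{F}(I)$, since $\mathfrak m I^n\cdot I^{n'}\subseteq \mathfrak m I^{n+n'}$. By Step 1, $\phi$ is injective in each degree. Its image in degree $n$ is the span of products of $n$ edge monomials. Therefore the total image is the $k$-subalgebra
\[
k[x_ix_j,\;x_iy_\ell\mid 1\leq i<j\leq a,\;1\leq i\leq a,\;1\leq \ell\leq b],
\]
and this subalgebra is graded by $n$ because all generators have degree $2$. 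So $\phi$ is the desired isomorphism.

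\textbf{Step 3: the semigroup description.} A subalgebra of $S$ generated by monomials is the semigroup ring $k[\mathcal{S}]$, where $\mathcal{S}\subseteq\mathbb{N}^{a+b}$ is the affine semigroup generated by the exponent vectors $e_i+e_j$ and $e_i+f_\ell$. The point is that distinct monomials are linearly independent over $k$.

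The only step needing care is Step 1, namely the equality $\mathfrak m I^n=I^n\cap S_{>2n}$ and the resulting well-definedness and injectivity of $\phi$. It rests entirely on $I$ being generated in a single degree, which holds here since every edge monomial is quadratic. As a consistency check, in each degree $n$ the resulting Hilbert function equals the number of distinct degree-$2n$ monomials in $I^n$, which is $\mu(I^n)$ as computed in Theorem~\ref{thm:number-generators}.
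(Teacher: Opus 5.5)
Your proof is correct and follows the same route as the paper, which identifies $\mathcal{F}(I)$ with the $k$-subalgebra of $S$ generated by the edge monomials. The difference is that you actually justify the step the paper only asserts with ``therefore'': equigeneration in degree $2$ gives $\mathfrak m I^n=I^n\cap\bigoplus_{t>2n}S_t$, so taking the degree-$2n$ component is a well-defined, injective, multiplicative map, and no extra relations arise among the generators.
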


\begin{proof}
Since $I$ is a monomial ideal generated by the edge monomials $x_i x_j \ (1\leq i<j\leq a)$ and $x_i y_\ell\quad (1\leq i\leq a,\ 1\leq \ell\leq b)$, the special fiber ring is generated over $k$ by the images of these monomials
modulo $\mathfrak{m}I$. Therefore $\mathcal{F}(I)$ is naturally isomorphic to
the $k$-subalgebra of $S$ generated by the same edge monomials. Hence
\[
\mathcal{F}(I)
\cong
k[x_i x_j,\;x_i y_\ell
\mid 1\leq i<j\leq a,\;1\leq i\leq a,\;1\leq \ell\leq b].
\]
This is precisely the edge ring of $\Gamma_P(R)$.
\end{proof}

\begin{theorem}\label{thm:analytic-spread}
Let $I=I(\Gamma_P(R))$. Then
\[
\ell(I)=
\begin{cases}
b, & a=1,\\
a+b, & a \geq 2.
\end{cases}
\]
Equivalently, in terms of the finite-ring parameters,
\[
\ell(I)=
\begin{cases}
|R|-2, & |P|=2, \\
|R|-1, & |P|\geq 3.
\end{cases}
\]
Since $|R|=|P||R/P|$, this can also be written as
\[
\ell(I)=
\begin{cases}
2(|R/P|-1), & |P|=2,\\
|P||R/P|-1, & |P|\geq 3.
\end{cases}
\]
\end{theorem}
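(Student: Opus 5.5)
The plan is to compute $\ell(I)=\dim\mathcal{F}(I)$ through the identification of Proposition~\ref{prop:fiber-ring-edge-ring}. By that proposition, $\mathcal{F}(I)$ is the monomial subalgebra $k[C]\subseteq S$ generated by the edge monomials of $\Gamma_P(R)$. For such an affine semigroup ring, the Krull dimension equals the transcendence degree of its fraction field over $k$. This in turn equals the rank of the subgroup $\mathbb{Z}C\subseteq\mathbb{Z}^{a+b}$ generated by the exponent vectors, or equivalently the $\mathbb{Q}$-rank of the matrix whose rows are the exponent vectors $e_{x_i}+e_{x_j}$ ($i<j$) and $e_{x_i}+e_{y_\ell}$. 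So the whole proof reduces to a rank computation, which I split according to whether $a=1$ or $a\geq 2$.

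\emph{Case $a=1$.} Here $I=(xy_1,\ldots,xy_b)$ and there are exactly $b$ generators $e_x+e_{y_\ell}$. They are linearly independent, since the $y_\ell$-coordinate detects the $\ell$-th vector. On the other hand, the rank can be at most the number of generators, so it is exactly $b$. Alternatively, Corollary~\ref{cor:star-case-ring} gives $\mu(I^n)=\binom{n+b-1}{b-1}$, a polynomial of degree $b-1$ in $n$, so the Hilbert function of $\mathcal{F}(I)$ yields $\dim\mathcal{F}(I)=b$ directly. I would mention this as a consistency check.

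\emph{Case $a\geq 2$.} I will show that the $\mathbb{Q}$-span of the exponent vectors is all of $\mathbb{Q}^{a+b}$; the rank cannot exceed $a+b$, the number of variables. The key point is that $x_i,x_j,y_1$ span a triangle, so the graph is non-bipartite. Concretely, for $i\neq j$,
\[
e_{x_i}=\tfrac12\bigl[(e_{x_i}+e_{x_j})+(e_{x_i}+e_{y_1})-(e_{x_j}+e_{y_1})\bigr],
\]
and then $e_{y_\ell}=(e_{x_1}+e_{y_\ell})-e_{x_1}$. Hence every standard basis vector lies in the span, and $\ell(I)=a+b$.

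Finally, I translate to ring data. The vertex count is $a+b=|R|-1$, so for $|P|\geq 3$ we get $\ell(I)=|R|-1=|P||R/P|-1$. For $|P|=2$, we have $b=|R|-2=2(|R/P|-1)$. The only step requiring care is justifying that $\dim k[C]$ equals the rank of $\mathbb{Z}C$. I would cite the standard fact that the fraction field of $k[C]$ is $k(\mathbb{Z}C)$, a purely transcendental extension of transcendence degree $\operatorname{rank}\mathbb{Z}C$. Everything else is the elementary linear algebra above.
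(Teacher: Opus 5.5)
Your proposal is correct and follows essentially the same route as the paper: you identify $\mathcal{F}(I)$ with the edge ring, reduce $\ell(I)$ to the rank of the lattice generated by the exponent vectors, and split into the cases $a=1$ and $a\geq 2$. The only difference is cosmetic: for $a\geq 2$ you show that the $\mathbb{Q}$-span contains every standard basis vector, using the triangle on $x_i,x_j,y_1$, whereas the paper exhibits $a+b$ specific exponent vectors and checks directly that they are linearly independent.
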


\begin{proof}
By Proposition~\ref{prop:fiber-ring-edge-ring}, the special fiber ring
$\mathcal{F}(I)$ is the affine semigroup ring generated by the exponent vectors
of the edge monomials
\[
x_i x_j\quad (1\leq i<j\leq a)\quad \text{and}\quad x_i y_\ell\quad (1\leq i\leq a, 1\leq \ell\leq b).
\]
Therefore, the analytic spread $\ell(I)=\dim \mathcal{F}(I)$ is the rank of the subgroup of $\mathbb{Z}^{a+b}$ generated by these exponent
vectors.
First suppose \(a=1\). Then $I=(x_1y_1,\ldots,x_1y_b)$.
The exponent vectors are $e_1+f_1,\ldots,e_1+f_b$,
where $e_1$ corresponds to $x_1$ and $f_\ell$ corresponds to $y_\ell$.
These vectors are linearly independent over $\mathbb{Q}$. Hence $\ell(I)=b$. This case is separated because when \(a=1\) there are no clique edges among the \(x\)-vertices; hence, the exponent vectors span only a \(b\)-dimensional subgroup, not an \((a+b)\)-dimensional one.

Now suppose $a\geq 2$. We show that the exponent vectors span a subgroup of
rank $a+b$. Consider the following $a+b$ vectors:
$e_1+e_2$, 
\[
e_1+f_\ell\qquad (1\leq \ell\leq b)\quad \text{and}\quad e_i+f_1\qquad (2\leq i\leq a).
\]
We prove that these vectors are linearly independent over $\mathbb{Q}$. Suppose
\[
c_0(e_1+e_2)
+
\sum_{\ell=1}^{b} c_\ell(e_1+f_\ell)
+
\sum_{i=2}^{a} d_i(e_i+f_1)
=0.
\]
Comparing the coordinates corresponding to $e_i$ for $i\geq 3$, we get
\[
d_i=0\qquad (i\geq 3).
\]
Comparing the coordinates corresponding to $f_\ell$ for $\ell\geq 2$, we get
\[
c_\ell=0\qquad (\ell\geq 2).
\]
The remaining coordinates give $c_0+c_1=0$, $c_0+d_2=0$, $c_1+d_2=0$.
Hence $c_0=c_1=d_2=0$.
Therefore, all coefficients are zero, and the displayed vectors are linearly
independent. Thus, the exponent vectors of the edge monomials span a subgroup of
rank \(a+b\). Since the polynomial ring has \(a+b\) variables, this is the maximum
possible rank. Hence $\ell(I)=a+b$ when $a\geq 2$.
Finally, substituting $a=|P|-1$, $b=|R|-|P|=|P|(|R/P|-1)$ 
gives $\ell(I)=|R|-2$ when $|P|=2$, and $\ell(I)=|R|-1$ when $|P|\geq 3$. This proves the theorem.
\end{proof}

\begin{corollary}\label{cor:hilbert-function-fiber}
The formula for $\mu(I^n)$ in Theorem~\ref{thm:number-generators} is the Hilbert
function of the special fiber ring $\mathcal{F}(I)$. That is,
\[
\operatorname{HF}_{\mathcal{F}(I)}(n)=\mu(I^n)
\]
for every $n\geq 0$. In particular, for $a\geq 2$, the function $n\longmapsto \mu(I^n)$ agrees for all sufficiently large \(n\) with a polynomial of degree \(a+b-1\). If \(a=1\), it agrees for all sufficiently large \(n\) with a polynomial of degree \(b-1\).
\end{corollary}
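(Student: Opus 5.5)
The plan is to combine three ingredients: the standard description of the Hilbert function of the fiber cone of an equigenerated ideal, the explicit count of Theorem~\ref{thm:number-generators}, and the dimension computation of Theorem~\ref{thm:analytic-spread}.

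First, I will make the grading on $\mathcal{F}(I)=\bigoplus_{n\ge 0}I^n/\mathfrak m I^n$ explicit. Each graded piece $I^n/\mathfrak m I^n$ is a $k$-vector space. By Nakayama's lemma (graded version), its dimension equals the number of elements in any minimal homogeneous generating system of $I^n$. For a monomial ideal this is $|G(I^n)|=\mu(I^n)$. Hence $\operatorname{HF}_{\mathcal{F}(I)}(n)=\mu(I^n)$ for all $n\ge 1$, and for $n=0$ both sides equal $1$.

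I will also check this against Proposition~\ref{prop:fiber-ring-edge-ring}. Since $I$ is generated in degree $2$, the identification of $\mathcal{F}(I)$ with the edge ring $k[I_2]$ respects the grading in which the generators have degree $1$. The degree-$n$ component of $k[I_2]$ is spanned by the distinct monomials that are products of $n$ edge monomials. These are exactly the degree-$2n$ monomials of $I^n$, which are its minimal generators, so Theorem~\ref{thm:ordinary-power-generators} gives the same count. Combined with Theorem~\ref{thm:number-generators}, this shows the displayed formula is the Hilbert function.

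For the asymptotic statement, note that $\mathcal{F}(I)$ is a standard graded $k$-algebra, since it is generated by its degree-one component $I/\mathfrak m I$. By the Hilbert--Serre theorem, its Hilbert function agrees for $n\gg 0$ with a polynomial of degree $\dim\mathcal{F}(I)-1=\ell(I)-1$. Theorem~\ref{thm:analytic-spread} then gives degree $a+b-1$ when $a\ge 2$ and degree $b-1$ when $a=1$.

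For $a=1$ this can be checked directly: Corollary~\ref{cor:star-case-ring} gives $\binom{n+b-1}{b-1}$, which is visibly a polynomial of degree $b-1$ in $n$. As an independent check for $a\ge 2$, I could read off the leading term of the summation formula, but that is not needed.

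The main point to handle carefully is the grading convention. The fiber ring must be graded by the power $n$, not by polynomial degree, and the edge-ring isomorphism must be graded with this normalization. Equigeneration of $I$ in degree $2$ makes this routine.
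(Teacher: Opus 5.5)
Your proposal is correct and follows essentially the same route as the paper. You identify $\mathcal{F}(I)_n=I^n/\mathfrak m I^n$ as a $k$-space of dimension $\mu(I^n)$ via the minimal monomial generators, and then use that the Hilbert polynomial has degree $\dim\mathcal{F}(I)-1=\ell(I)-1$ together with Theorem~\ref{thm:analytic-spread}; your additional cross-checks through the edge-ring description and the star case are consistent but not needed.
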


\begin{proof}
For any homogeneous ideal generated in one degree, the degree \(n\) component of the special fiber ring satisfies $\mathcal{F}(I)_n=I^n/\mathfrak{m}I^n$. Since $I^n$ is a monomial ideal, the vector-space dimension of $I^n/\mathfrak{m}I^n$ is exactly the number of minimal monomial generators of
$I^n$. Hence
\[
\operatorname{HF}_{\mathcal{F}(I)}(n)
=
\dim_k I^n/\mathfrak{m}I^n
=
\mu(I^n).
\]
The degree of the Hilbert polynomial of $\mathcal{F}(I)$ is
$\dim \mathcal{F}(I)-1=\ell(I)-1$.
The claim therefore follows from Theorem~\ref{thm:analytic-spread}.
\end{proof}

\begin{theorem}\label{thm:fiber-normal-cm}
The special fiber ring $\mathcal{F}(I)$ is a normal affine semigroup ring. Hence
$\mathcal{F}(I)$ is Cohen-Macaulay.
\end{theorem}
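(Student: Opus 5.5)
By Proposition~\ref{prop:fiber-ring-edge-ring}, $\mathcal{F}(I)$ is the edge ring $k[G]$ of $G=\Gamma_P(R)\cong K_a\vee\overline{K}_b$. The plan is to prove normality of this edge ring and then deduce Cohen--Macaulayness from Hochster's theorem, which says a normal affine semigroup ring is Cohen--Macaulay. For normality I would use the criterion of Ohsugi--Hibi and Simis--Vasconcelos--Villarreal~\cite{OH,VILL}: for a connected graph $G$, the edge ring $k[G]$ is normal if and only if $G$ satisfies the \emph{odd cycle condition}. This condition requires that any two vertex-disjoint minimal odd cycles $C_1,C_2$ either share a vertex or are joined by an edge (a bridge) of $G$. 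The graph is connected, since every $v_\ell$ is adjacent to $u_1$ and $A$ is a clique. So the task reduces to verifying the odd cycle condition.

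\textbf{Case $a=1$.} The graph is a star, hence bipartite, and has no odd cycles, so the condition holds vacuously. Alternatively, the generators $x_1y_\ell$ are algebraically independent by the proof of Theorem~\ref{thm:analytic-spread}, so $\mathcal{F}(I)$ is a polynomial ring.

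\textbf{Case $a\geq 2$.} Let $C$ be any odd cycle. Since $B$ is independent, the vertices of $C$ lying in $B$ are pairwise nonconsecutive on $C$. If $C$ contained no vertex of $A$, it would lie in $B$ and have no edges, which is impossible. In fact, an odd cycle cannot alternate perfectly between $A$ and $B$, so $C$ has at least two consecutive vertices in $A$. In particular, every odd cycle meets $A$.

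Now take two vertex-disjoint odd cycles $C_1,C_2$, and choose $u\in V(C_1)\cap A$ and $u'\in V(C_2)\cap A$. Disjointness gives $u\neq u'$. Since $A$ is a clique, $uu'$ is an edge of $G$ joining $C_1$ and $C_2$. Thus the odd cycle condition holds, and $k[G]$ is normal.

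The main obstacle is making sure the cited criterion is applied in the exact form proved in~\cite{OH,VILL}, namely with minimal (chordless) odd cycles and the notion of a bridge. The argument above works for arbitrary odd cycles, so it covers the minimal ones too.

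As a fallback, I would give a self-contained argument describing the semigroup directly. The edge monomials correspond to degree sequences. A vector $(\alpha,\beta)$ with $|\alpha|+|\beta|$ even lies in the semigroup exactly when $|\beta|\le|\alpha|$ and $\alpha_i\le |\alpha|+|\beta|-\alpha_i$ for each $i$; this follows from Theorem~\ref{thm:ordinary-power-generators} together with the proof technique of Lemma~\ref{lem:powers-clique-part}. These are linear inequalities plus a parity condition. Because the lattice generated has full rank $a+b$ by Theorem~\ref{thm:analytic-spread}, the parity condition is precisely membership in that lattice, so the semigroup equals the intersection of the lattice with a rational cone. Hence the semigroup is saturated, i.e.\ normal.

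In either route, Hochster's theorem then yields that $\mathcal{F}(I)$ is Cohen--Macaulay.
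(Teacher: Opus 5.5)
Your main argument is correct and is essentially the paper's proof. Both identify $\mathcal{F}(I)$ with the edge ring, dispose of the star case $a=1$, verify the odd-cycle condition by taking one clique vertex from each of two disjoint odd cycles, and conclude with Hochster's theorem; your remark that the criterion requires $G$ to be connected is a precision the paper omits.

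One caveat concerns only your fallback argument. Full rank of the edge lattice does not by itself identify it with the lattice of even coordinate sum, since you also need its index in $\mathbb{Z}^{a+b}$ to be $2$. This does hold: the $a+b$ vectors exhibited in the proof of Theorem~\ref{thm:analytic-spread} have determinant $\pm 2$, with the triangle $u_1u_2v_1$ contributing the factor $2$.
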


\begin{proof}
By Proposition~\ref{prop:fiber-ring-edge-ring}, the special fiber ring $\mathcal{F}(I)$ is the edge ring of the graph $\Gamma_P(R)\cong K_a\vee \overline{K}_b$. By the normality criterion for edge rings of finite simple graphs~\cite{OH, VILL}, the edge ring of a graph is normal if and only if the graph satisfies the odd-cycle condition. The odd-cycle condition says that whenever two odd cycles are vertex
disjoint, there is an edge joining a vertex of one cycle to a vertex of the other.
If $a=1$, then $\Gamma_P(R)\cong K_{1,b}$ is a star graph. Hence, it is bipartite and contains no odd cycle. Therefore, the
odd-cycle condition is satisfied vacuously. Equivalently, in this case
\[
\mathcal{F}(I)\cong k[x_1y_1,\ldots,x_1y_b],
\]
which is a polynomial ring in $b$ variables.
Now assume $a\geq 2$. Let $C_1$ and $C_2$ be two vertex-disjoint odd cycles in
\(\Gamma_P(R)\). Since the vertices corresponding to \(B=R\setminus P\) form an
independent set, every cycle in \(\Gamma_P(R)\) contains at least one vertex from
\(A=P\setminus\{0\}\). Choose
\[
u_i\in C_1\cap A\quad \text{and}\quad u_j\in C_2\cap A.
\]
Since \(C_1\) and \(C_2\) are vertex disjoint, we have \(u_i\neq u_j\). The set
\(A\) is a clique, so \(u_i\) and \(u_j\) are adjacent. Thus, there is an edge joining
a vertex of \(C_1\) to a vertex of \(C_2\). Therefore \(\Gamma_P(R)\) satisfies the
odd-cycle condition.
It follows that the edge ring of \(\Gamma_P(R)\), and hence \(\mathcal{F}(I)\), is
normal. Since \(\mathcal{F}(I)\) is a normal affine semigroup ring, Hochster's
theorem implies that \(\mathcal{F}(I)\) is Cohen-Macaulay.
\end{proof}

\begin{corollary}\label{cor:fiber-summary-ring}
Let \(I=I(\Gamma_P(R))\). Then the fiber-cone invariants of \(I\) depend only on
the two finite-ring parameters \(|P|\) and \(|R/P|\). More precisely,
\[
\operatorname{HF}_{\mathcal{F}(I)}(n)=\mu(I^n)
\]
is given by Theorem~\ref{thm:number-generators},
\[
\ell(I)=
\begin{cases}
|R|-2, & |P|=2,\\
|R|-1, & |P|\geq 3,
\end{cases}
\]
and \(\mathcal{F}(I)\) is normal and Cohen--Macaulay.
\end{corollary}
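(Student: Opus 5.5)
This corollary only assembles results already established, so the plan is to cite each piece and then observe that every ingredient depends on $(R,P)$ only through $m=|P|$ and $q=|R/P|$.

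First, I will invoke Proposition~\ref{prop:ring-induced-split}. It shows that the isomorphism type of $\Gamma_P(R)$ is $K_{m-1}\vee\overline{K}_{m(q-1)}$. Hence, up to renaming variables, the edge ideal $I$ in $S=k[x_1,\ldots,x_a,y_1,\ldots,y_b]$ is determined by $a=|P|-1$ and $b=|P|(|R/P|-1)$. Any invariant of $I$ that is stable under permutation of variables, in particular $\mu(I^n)$, the fiber ring $\mathcal{F}(I)$, and $\ell(I)$, is therefore a function of $(|P|,|R/P|)$ alone. I will assume $m\ge 2$ throughout, as in Remark~\ref{rem:finite-field-case}.

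Second, for the Hilbert function, I will cite Corollary~\ref{cor:hilbert-function-fiber}. It gives $\operatorname{HF}_{\mathcal{F}(I)}(n)=\dim_k I^n/\mathfrak m I^n=\mu(I^n)$, and Theorem~\ref{thm:number-generators} supplies the explicit binomial sum in the ring parameters.

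Third, for the analytic spread, I will quote Theorem~\ref{thm:analytic-spread}. The only translation needed is $a=1\iff |P|=2$ and $a\ge 2\iff |P|\ge 3$, together with $a+b=|P|-1+|R|-|P|=|R|-1$ and $b=|R|-|P|=|R|-2$ when $|P|=2$.

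Fourth, normality and the Cohen--Macaulay property come directly from Theorem~\ref{thm:fiber-normal-cm}. That theorem checks the odd-cycle condition and then applies Hochster's theorem.

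There is no real obstacle here. The only point needing care is the case split $|P|=2$ versus $|P|\ge 3$, where the clique part $J$ vanishes and the rank drops by one. I will verify that this matches the displayed formula, noting that $|R|-2=2(|R/P|-1)$ when $|P|=2$.
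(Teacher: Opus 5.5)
Your proposal is correct and follows the paper's proof, which likewise just cites Corollary~\ref{cor:hilbert-function-fiber}, Theorem~\ref{thm:analytic-spread}, and Theorem~\ref{thm:fiber-normal-cm}. You also use Proposition~\ref{prop:ring-induced-split} to justify the ``depends only on $(|P|,|R/P|)$'' clause, and you check the translations $a=1\iff |P|=2$ and $a+b=|R|-1$ explicitly; both are sound details that the paper leaves implicit.
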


\begin{proof}
The Hilbert-function statement follows from
Corollary~\ref{cor:hilbert-function-fiber}. The analytic-spread formula follows
from Theorem~\ref{thm:analytic-spread}. Normality and Cohen--Macaulayness follow
from Theorem~\ref{thm:fiber-normal-cm}.
\end{proof}

\begin{remark}\label{rem:fiber-cone-novelty}
The fiber-cone viewpoint strengthens the ordinary-power description. The formula
for \(\mu(I^n)\) is not only a generator count; it is the Hilbert function of the
special fiber ring. Moreover, the analytic spread, normality, and
Cohen-Macaulayness of \(\mathcal{F}(I)\) show that the ordinary powers of
\(I(\Gamma_P(R))\) are controlled by a well-behaved affine semigroup ring determined
by the finite-ring data \(|P|\) and \(|R/P|\).
\end{remark}

\section{Polymatroidal structure and linear powers}

In this section, we prove that the ordinary powers of the edge ideal of a prime
ideal graph have strong homological properties. More precisely, we show that the
edge ideal is matroidal, and hence polymatroidal. It follows that all ordinary
powers are polymatroidal. Consequently, every ordinary power has linear quotients
and a linear minimal free resolution.

We keep the notation
\[
I=I(\Gamma_P(R))\subseteq S=k[x_1,\ldots,x_a,y_1,\ldots,y_b],
\]
where $a=|P|-1$, $b=|P|(|R/P|-1)$. Thus
$\Gamma_P(R)\cong K_a\vee \overline{K}_b$. We assume \(a\geq 1\).

\begin{definition}
Let \(L\subseteq k[z_1,\ldots,z_r]\) be a monomial ideal generated in a single
degree. The ideal \(L\) is called polymatroidal if for any two monomials
\[
u=z_1^{\alpha_1}\cdots z_r^{\alpha_r},
\qquad
v=z_1^{\beta_1}\cdots z_r^{\beta_r}
\]
in \(G(L)\), whenever \(\alpha_i>\beta_i\), there exists an index \(j\) such that
$\alpha_j<\beta_j$
and
\[
\frac{z_j u}{z_i}\in G(L).
\]
If, in addition, \(L\) is square-free and generated in one degree, then \(L\) is
called matroidal when its minimal generators correspond to the bases of a matroid.
Every matroidal ideal is polymatroidal.
\end{definition}

\begin{proposition}\label{prop:edge-ideal-matroidal}
The edge ideal \(I=I(\Gamma_P(R))\) is matroidal. In particular, \(I\) is
polymatroidal.
\end{proposition}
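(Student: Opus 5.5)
Since $I$ is square-free and generated in degree $2$, I will show that $G(I)$ satisfies the exchange property of the preceding definition. For square-free monomial ideals generated in a single degree, this property is equivalent to the generators forming the bases of a matroid. I will first describe $G(I)$ combinatorially. By Theorem~\ref{thm:ordinary-power-generators} with $n=1$ (or directly from the description of $I=J+K$), the minimal generators are exactly the square-free degree-$2$ monomials $z_iz_j$ in the variables $x_1,\ldots,x_a,y_1,\ldots,y_b$ such that \emph{at least one} of the two variables is an $x$-variable. Equivalently, the bases are the $2$-subsets of the ground set $X\cup Y$ that meet $X=\{x_1,\ldots,x_a\}$. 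This is the family of bases of a rank-$2$ matroid, namely the truncation to rank $2$ of the direct sum of the free matroid on $X$ and the rank-$1$ uniform matroid on $Y$. I will nevertheless verify the exchange axiom by hand, since that is what the definition requires.

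Let $u,v\in G(I)$, and let $z_i$ be a variable dividing $u$ but not $v$. Write $u=z_iz_k$. I need a variable $z_j$ dividing $v$ but not $u$ such that $z_kz_j\in G(I)$, that is, such that $z_k\neq z_j$ and $\{z_k,z_j\}\cap X\neq\emptyset$. First, $v$ has two distinct variables and at most one of them is $z_k$, and neither equals $z_i$. So at least one variable of $v$ does not divide $u$, and any such $z_j$ automatically differs from $z_k$. The argument then splits into two cases.

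If $z_k\in X$, then any such $z_j$ works, because $z_kz_j$ contains the $x$-variable $z_k$. If $z_k\in Y$, then $z_k z_j\in G(I)$ requires $z_j\in X$. The set $\{z_k,z_j\}$ is thus forced to meet $X$ through $z_j$, and this is the step I expect to need care. Since $v\in G(I)$, $v$ contains some $x$-variable $x_t$, and $x_t\neq z_i$ because $z_i\nmid v$. Moreover $x_t\neq z_k$ because $z_k\in Y$. Hence $x_t$ does not divide $u=z_iz_k$, and choosing $z_j=x_t$ gives $z_kx_t\in G(I)$.

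In every case the exchange property holds, so the bases form a matroid and $I$ is matroidal. Hence $I$ is polymatroidal by the remark following the definition. The exponent condition $\alpha_j<\beta_j$ in the definition reduces, in the square-free setting, to ``$z_j$ divides $v$ but not $u$''. I will note explicitly that this is the form of the exchange property used above.
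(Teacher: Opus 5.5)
Your proof is correct and follows essentially the same route as the paper: both identify $G(I)$ with the $2$-subsets of $X\cup Y$ that meet $X$ and verify the exchange axiom directly. The only difference is cosmetic. You split cases on whether the retained variable $z_k$ of $u$ lies in $X$, while the paper splits on whether $B_2\setminus B_1$ contains an element of $X$.
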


\begin{proof}
Let $X=\{x_1,\ldots,x_a\}$ and $Y=\{y_1,\ldots,y_b\}$. The minimal generators of \(I\) are exactly the square-free quadratic monomials
whose support contains at least one variable from \(X\). Equivalently, the generators of \(I\) correspond to the two-element subsets of \(X\cup Y\) which are not contained entirely in \(Y\). Thus, the corresponding collection of two-element
sets is
\[
\mathcal{B}
=
\{\{z_i,z_j\}\subseteq X\cup Y:\{z_i,z_j\}\not\subseteq Y\}.
\]
We verify the basis-exchange axiom for \(\mathcal{B}\).

Let \(B_1,B_2\in\mathcal{B}\), and let \(e\in B_1\setminus B_2\). We need to find
\(f\in B_2\setminus B_1\) such that
\[
(B_1\setminus\{e\})\cup\{f\}\in\mathcal{B}.
\]
If \(B_2\setminus B_1\) contains an element of \(X\), choose such an element \(f\).
Then $(B_1\setminus\{e\})\cup\{f\}$ contains an element of \(X\), and hence
belongs to \(\mathcal{B}\).
Now suppose that $B_2\setminus B_1\subseteq Y$. Since \(B_2\in\mathcal{B}\), the set \(B_2\) contains at least one element of \(X\). This element cannot lie in \(B_2\setminus B_1\), so it must lie in $B_1\cap B_2$. 
Therefore, after replacing \(e\) by any element \(f\in B_2\setminus B_1\), the set $(B_1\setminus{e})\cup{f}$ still contains an element of \(X\). Hence it belongs to \(\mathcal{B}\).
Thus \(\mathcal{B}\) satisfies the basis-exchange axiom. Hence \(\mathcal{B}\) is
the set of bases of a matroid and \(I\) is a matroidal ideal. Since every
matroidal ideal is polymatroidal, \(I\) is polymatroidal.
\end{proof}

\begin{theorem}\label{thm:powers-polymatroidal}
For every integer \(n\geq 1\), the ordinary power \(I^n\) is polymatroidal.
\end{theorem}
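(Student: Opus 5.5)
The plan is to deduce the statement from Proposition~\ref{prop:edge-ideal-matroidal} together with the standard theorem of Conca and Herzog (see also Herzog--Hibi): products of polymatroidal ideals are polymatroidal. Since \(I\) is polymatroidal, induction on \(n\) using \(I^n=I^{n-1}\cdot I\) gives the claim at once. Because the paper aims to be explicit, I would also give a direct verification of the exchange property, using the description of \(G(I^n)\) in Theorem~\ref{thm:ordinary-power-generators}. That description makes \(G(I^n)\) the set of lattice points of an integral polymatroid.

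For the direct route, take \(u=x^\alpha y^\beta\) and \(v=x^{\alpha'}y^{\beta'}\) in \(G(I^n)\), and suppose some exponent of \(u\) is strictly larger than the corresponding exponent of \(v\). Since both have degree \(2n\), there is some variable whose exponent in \(v\) exceeds that in \(u\). I would split into cases according to whether the decreased variable \(z_i\) is an \(x\) or a \(y\), and whether the candidate increased variable \(z_j\) is an \(x\) or a \(y\). The move \(u\mapsto z_ju/z_i\) preserves total degree \(2n\), so only two conditions must be checked: \(|\beta|\leq n\) and \(\alpha_k\leq n\) for all \(k\).

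\begin{enumerate}
\item If \(z_i=y_p\), choose \(z_j\) to be any variable with larger exponent in \(v\). Then \(|\beta|\) does not increase. An \(x_k\)-exponent could become \(n+1\) only if \(\alpha_k=n\); but then \(\alpha'_k>n\), which is impossible. So any such \(j\) works.
\item If \(z_i=x_p\) and some \(x_k\) has \(\alpha'_k>\alpha_k\), take \(z_j=x_k\). Then \(\alpha_k+1\leq\alpha'_k\leq n\), and \(|\beta|\) is unchanged.
\item If \(z_i=x_p\) and no \(x\)-exponent increases, then \(|\alpha'|<|\alpha|\), so \(|\beta'|>|\beta|\). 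Choose \(y_q\) with \(\beta'_q>\beta_q\). The new \(y\)-degree is \(|\beta|+1\leq|\beta'|\leq n\), so the move is admissible.
\end{enumerate}

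The main obstacle is making sure the case analysis is exhaustive, in particular the mixed case (3). There one must use the degree balance \(|\alpha|+|\beta|=|\alpha'|+|\beta'|\) to guarantee that a \(y\)-variable with larger exponent in \(v\) exists. I would present the Conca--Herzog citation as the main argument and the explicit exchange check as a confirmation.
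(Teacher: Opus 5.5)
Your main argument is the paper's own: Proposition~\ref{prop:edge-ideal-matroidal} makes $I$ polymatroidal, and the product theorem for polymatroidal ideals (Conca--Herzog, Herzog--Hibi), applied to $I^n=I\cdots I$, finishes the proof. Your additional direct exchange check via Theorem~\ref{thm:ordinary-power-generators} is also correct: the three cases are exhaustive, and each move keeps the degree $2n$, $|\beta|\le n$ and every $\alpha_k\le n$. The paper does not include this check; it gives a self-contained confirmation that does not need the cited closure property, but it relies on the explicit generator description instead.
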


\begin{proof}
By Proposition~\ref{prop:edge-ideal-matroidal}, the ideal \(I\) is polymatroidal. A standard closure property of polymatroidal ideals states that products of polymatroidal ideals are again polymatroidal \cite{HER,CON}. Applying this to the product
\[
I^n=\underbrace{I\cdots I}_{n\text{ times}},
\]
we conclude that \(I^n\) is polymatroidal for every \(n\geq 1\).
\end{proof}

\begin{remark}\label{rem:explicit-generators-and-polymatroidality}
Theorem~\ref{thm:powers-polymatroidal} gives the homological structure of the
ordinary powers, while Theorem~\ref{thm:ordinary-power-generators} gives their
explicit minimal monomial generators. Thus the two results complement each other:
polymatroidality explains why the powers have good homological behavior, whereas
the generator formula gives concrete information about \(\mu(I^n)\), the Hilbert
function of the special fiber ring, and the analytic spread.
\end{remark}

\begin{corollary}\label{cor:linear-quotients}
For every integer \(n\geq 1\), the ideal \(I^n\) has linear quotients.
\end{corollary}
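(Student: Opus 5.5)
The plan is to deduce Corollary~\ref{cor:linear-quotients} from Theorem~\ref{thm:powers-polymatroidal}, using the standard fact that polymatroidal ideals have linear quotients with respect to the reverse lexicographic order of their minimal generators~\cite{HER}. By Theorem~\ref{thm:powers-polymatroidal}, each $I^n$ is polymatroidal. By Theorem~\ref{thm:ordinary-power-generators}, it is generated in the single degree $2n$. So the cited result applies directly and gives an ordering $u_1,\ldots,u_N$ of $G(I^n)$ such that each colon ideal $(u_1,\ldots,u_{k-1}):u_k$ is generated by variables.

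If a self-contained argument is preferred, I would reprove the needed step from the exchange property. Order $G(I^n)$ decreasingly in the revlex order with $x_1>\cdots>x_a>y_1>\cdots>y_b$. Fix $u_j$ preceding $u_k$, written $u_j=z^{\alpha}$ and $u_k=z^{\beta}$ in the common variable list $z$. Let $t$ be the largest index with $\alpha_t\neq\beta_t$. Revlex forces $\alpha_t<\beta_t$, so $\beta_t>\alpha_t$ and $\beta_i=\alpha_i$ for all $i>t$.

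Apply the polymatroidal exchange to the pair $(u_k,u_j)$ at index $t$. It yields an index $i$ with $\beta_i<\alpha_i$ and $w=z_iu_k/z_t\in G(I^n)$. Such an $i$ necessarily satisfies $i<t$, so $w$ comes before $u_k$ in revlex. Moreover $z_i$ divides $u_j/\gcd(u_j,u_k)$. Hence the colon ideal $(u_1,\ldots,u_{k-1}):u_k$ is generated by the variables $z_i$ arising this way, which is the definition of linear quotients.

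The main obstacle is getting the direction of the exchange and the revlex comparison right, so that the new generator $w$ indeed precedes $u_k$. Beyond that, the statement is a direct consequence of the cited theory.
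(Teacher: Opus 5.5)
Your proposal is correct and follows the same route as the paper: $I^n$ is polymatroidal by Theorem~\ref{thm:powers-polymatroidal}, and polymatroidal ideals have linear quotients with respect to the reverse lexicographic order. Your optional self-contained paragraph is a correct reproof of that cited fact. Exchanging at the last differing index $t$ gives $w=z_iu_k/z_t$ with $i<t$, so $w$ precedes $u_k$; hence $z_i\in(u_1,\ldots,u_{k-1}):u_k$ and $z_i$ divides $u_j/\gcd(u_j,u_k)$, which is exactly what linear quotients require.
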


\begin{proof}
By Theorem~\ref{thm:powers-polymatroidal}, the ideal \(I^n\) is polymatroidal. Every polymatroidal ideal has linear quotients with respect to a suitable reverse lexicographic order \cite{HER,JAH}. Therefore
\(I^n\) has linear quotients.
\end{proof}

\begin{corollary}\label{cor:linear-resolution}
For every integer \(n\geq 1\), the ideal \(I^n\) has a \(2n\)-linear minimal free
resolution. In particular,
\[
\operatorname{reg}(I^n)=2n.
\]
Thus \(I=I(\Gamma_P(R))\) has linear powers.
\end{corollary}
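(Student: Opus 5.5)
The plan is to deduce the statement from Corollary~\ref{cor:linear-quotients}, using the standard fact that an equigenerated monomial ideal with linear quotients has a linear resolution. Fix \(n\geq 1\). By Theorem~\ref{thm:ordinary-power-generators}, or directly because \(I\) is generated by quadratic monomials, every minimal generator of \(I^n\) has degree exactly \(2n\). Hence \(I^n\) is generated in the single degree \(2n\).

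Next, Corollary~\ref{cor:linear-quotients} supplies an ordering \(u_1,\ldots,u_N\) of \(G(I^n)\) such that each colon ideal \((u_1,\ldots,u_{k-1}):u_k\) is generated by variables. I would then apply the mapping cone (iterated extension) argument of Herzog and Takayama~\cite{HERZO}, or equivalently the well-known lemma that equigenerated ideals with linear quotients have linear resolutions.

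The argument proceeds by induction on \(k\). For each \(k\) there is a short exact sequence
\[
0\to S/((u_1,\ldots,u_{k-1}):u_k)(-2n)\to S/(u_1,\ldots,u_{k-1})\to S/(u_1,\ldots,u_k)\to 0.
\]
The colon ideal is generated by variables, so its quotient is resolved by a Koszul complex, which is linear. Assume inductively that \((u_1,\ldots,u_{k-1})\) has a \(2n\)-linear resolution. The mapping cone of the comparison map is then a free resolution of \((u_1,\ldots,u_k)\) whose \(i\)-th module is generated in degree \(2n+i\).

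A resolution concentrated in these degrees is automatically minimal. The differentials strictly raise degree by one, so no unit entries can occur, and the induction closes. Taking \(k=N\) shows that \(\beta_{i,j}(I^n)=0\) unless \(j=2n+i\), which means \(I^n\) has a \(2n\)-linear minimal free resolution.

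It then follows that \(\operatorname{reg}(I^n)=\max_i\{j-i:\beta_{i,j}\neq 0\}=2n\). Equality holds rather than just an upper bound, since \(\beta_{0,2n}=\mu(I^n)>0\); this uses \(I\neq 0\), which is guaranteed by \(a\geq 1\). Since this holds for every \(n\), \(I\) has linear powers.

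The main obstacle is only bookkeeping: checking that the mapping cone really is minimal, that is, that the degree shifts line up. If one prefers to avoid this, the alternative is to cite directly the Herzog–Hibi result~\cite{HER} that polymatroidal ideals have linear resolutions, combined with Theorem~\ref{thm:powers-polymatroidal}.
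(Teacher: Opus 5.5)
Your proposal is correct and follows essentially the same route as the paper: \(I^n\) is generated in the single degree \(2n\), Corollary~\ref{cor:linear-quotients} gives linear quotients, and the standard lemma that an equigenerated ideal with linear quotients has a linear resolution finishes the argument. Your mapping-cone sketch of that lemma, and your remark that \(\beta_{0,2n}(I^n)=\mu(I^n)>0\) forces \(\operatorname{reg}(I^n)=2n\) rather than only an upper bound, just make explicit what the paper invokes as standard.
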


\begin{proof}
By Theorem~\ref{thm:ordinary-power-generators}, all minimal monomial generators
of \(I^n\) have degree \(2n\). By Corollary~\ref{cor:linear-quotients}, the ideal
\(I^n\) has linear quotients. A monomial ideal generated in a single degree and
having linear quotients has a linear minimal free resolution. Hence \(I^n\) has a
\(2n\)-linear minimal free resolution. Therefore $\operatorname{reg}(I^n)=2n$.
Since this holds for every \(n\geq 1\), the ideal \(I\) has linear powers.
\end{proof}

\begin{corollary}\label{cor:ring-parameter-linear-powers}
Let \(R\) be a finite commutative ring with identity and let \(P\) be a proper
prime ideal of \(R\) with \(|P|\geq 2\). Then the edge ideal of the prime ideal
graph \(\Gamma_P(R)\) has linear powers. More precisely, for every \(n\geq 1\),
\[
\operatorname{reg}\big(I(\Gamma_P(R))^n\big)=2n.
\]
\end{corollary}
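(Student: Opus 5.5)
This statement is essentially Corollary~\ref{cor:linear-resolution} restated in ring-theoretic terms, so the plan is to reduce to that corollary. First, I would use the hypothesis \(|P|\geq 2\). By Proposition~\ref{prop:ring-induced-split}, it gives \(\Gamma_P(R)\cong K_a\vee\overline{K}_b\) with \(a=|P|-1\geq 1\) and \(b=|P|(|R/P|-1)\geq 2\). Under the vertex labelling of Section~3, the edge ideal becomes
\[
I=(x_ix_j,\;x_iy_\ell\mid 1\leq i<j\leq a,\;1\leq i\leq a,\;1\leq \ell\leq b).
\]
The graph isomorphism only renames variables, so graded Betti numbers and regularity are unchanged, and we may work with this \(I\). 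The hypothesis \(a\geq 1\) is the standing assumption of Section~6. It excludes the case \(P=(0)\) of Remark~\ref{rem:finite-field-case}, where \(I=0\) and the statement would be vacuous or false.

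Next, I would chain the results of Section~6. Proposition~\ref{prop:edge-ideal-matroidal} gives that \(I\) is polymatroidal. Theorem~\ref{thm:powers-polymatroidal} then gives that \(I^n\) is polymatroidal for all \(n\), and Corollary~\ref{cor:linear-quotients} that it has linear quotients. Theorem~\ref{thm:ordinary-power-generators} shows \(G(I^n)\) is concentrated in degree \(2n\). It is also nonempty, since \(x_1^ny_1^n\) satisfies the conditions there.

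The standard fact that an equigenerated ideal with linear quotients has a linear resolution then gives \(\operatorname{reg}(I^n)\leq 2n\), via the mapping cone iteration. The reverse inequality \(\operatorname{reg}(I^n)\geq 2n\) holds because \(I^n\) has a minimal generator of degree \(2n\). Hence \(\operatorname{reg}(I^n)=2n\).

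I do not expect a real obstacle. The only care needed is to check that the hypothesis \(|P|\geq2\) exactly matches the condition \(a\geq1\) used throughout Sections~4--6, and that the degree-\(2n\) generator set is nonempty so the lower bound holds.
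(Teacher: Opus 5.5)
Your proposal is correct and takes the same route as the paper. The paper's proof simply invokes Corollary~\ref{cor:linear-resolution}, whose own proof is the chain you unpack: matroidal $\Rightarrow$ polymatroidal powers $\Rightarrow$ linear quotients $\Rightarrow$ $2n$-linear resolution. Your extra checks, that $|P|\geq 2$ corresponds to $a\geq 1$ and that $G(I^n)\neq\emptyset$ (which gives $\operatorname{reg}(I^n)\geq 2n$), are fine and fill in details the paper leaves implicit.
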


\begin{proof}
This is exactly Corollary~\ref{cor:linear-resolution} applied to
\(I=I(\Gamma_P(R))\).
\end{proof}

\begin{remark}\label{rem:homological-contribution}
The preceding results show that the ordinary powers of \(I(\Gamma_P(R))\) are not
only explicitly computable but also homologically well behaved. In particular, all
ordinary powers are polymatroidal, have linear quotients, and have linear
resolutions. These properties are governed by the complete split structure induced
by the prime ideal \(P\), and the resulting regularity formula is independent of the
individual values of \(|P|\) and \(|R/P|\).
\end{remark}

\begin{proposition}\label{prop:ring-parameter-invariance}
Let \(R\) and \(R'\) be finite commutative rings with identity, and let
\(P\subseteq R\) and \(P'\subseteq R'\) be proper prime ideals. Suppose that
\[
|P|=|P'|
\qquad\text{and}\qquad
|R/P|=|R'/P'|.
\]
Set
\[
I=I(\Gamma_P(R))
\qquad\text{and}\qquad
I'=I(\Gamma_{P'}(R')).
\]
Then \(\Gamma_P(R)\) and \(\Gamma_{P'}(R')\) are isomorphic graphs. Consequently,
after a relabeling of variables, the edge ideals \(I\) and \(I'\) are isomorphic
monomial ideals. In particular, the following invariants agree for \(I\) and \(I'\):

\begin{enumerate}
    \item the number of minimal vertex covers;
    \item the number and heights of the minimal primes;
    \item the ordinary-power generator counts \(\mu(I^n)\) and \(\mu((I')^n)\) for all \(n\geq 1\);
    \item the Hilbert functions of the special fiber rings \(\mathcal{F}(I)\) and \(\mathcal{F}(I')\);
    \item the analytic spreads \(\ell(I)\) and \(\ell(I')\);
    \item the regularity sequences of ordinary powers.
\end{enumerate}

More precisely,
\[
\mu(I^n)=\mu((I')^n)
\]
for all \(n\geq 1\),
\[
\operatorname{HF}_{\mathcal{F}(I)}(n)
=
\operatorname{HF}_{\mathcal{F}(I')}(n)
\]
for all \(n\geq 0\),
\[
\ell(I)=\ell(I'),
\]
and
\[
\operatorname{reg}(I^n)=\operatorname{reg}((I')^n)=2n
\]
for every \(n\geq 1\).
\end{proposition}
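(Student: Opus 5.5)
The plan is to get the graph isomorphism from Proposition~\ref{prop:ring-induced-split} and then obtain every listed invariant by transport of structure. Set $m=|P|=|P'|$ and $q=|R/P|=|R'/P'|$. By Proposition~\ref{prop:ring-induced-split}, both $\Gamma_P(R)$ and $\Gamma_{P'}(R')$ are isomorphic to $K_{m-1}\vee\overline{K}_{m(q-1)}$, and hence to each other. I will make the isomorphism explicit. Choose bijections $P\setminus\{0\}\to P'\setminus\{0\}$ and $R\setminus P\to R'\setminus P'$; these exist because the cardinalities agree, since $|R\setminus P|=m(q-1)=|R'\setminus P'|$. The combined map preserves adjacency, because in each graph the adjacency is determined only by membership in the clique part or the independent part.

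Next I translate the graph isomorphism into an isomorphism of edge ideals. Label the vertices of both graphs by the same variables $x_1,\ldots,x_a,y_1,\ldots,y_b$ via this bijection. Then the induced $k$-algebra isomorphism $\varphi:S\to S'$ sending variables to variables maps the edge monomials of one graph bijectively onto those of the other, so $\varphi(I)=I'$. Since $\varphi$ is a graded isomorphism of polynomial rings permuting variables, it also gives $\varphi(I^n)=(I')^n$ and $\varphi(\mathfrak m)=\mathfrak m'$. It therefore induces isomorphisms of special fiber rings $\mathcal F(I)\cong\mathcal F(I')$ and of minimal graded free resolutions.

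With these isomorphisms in hand, the listed items follow one at a time. Items (1) and (2) follow from the bijection of minimal vertex covers, or equivalently from Proposition~\ref{prop:minimal-vertex-covers-ring} and Corollary~\ref{cor:primary-decomposition-ring}: both ideals have $m$ minimal primes, of heights $m-1$ (once) and $m-2+m(q-1)$ ($m-1$ times). Item (3) is Theorem~\ref{thm:number-generators}, since that formula depends only on $a,b$; the isomorphism $\varphi$ also gives it directly. Item (4) follows from Corollary~\ref{cor:hilbert-function-fiber}, together with the observation that $\mu(I^0)=1=\mu((I')^0)$ handles $n=0$. Item (5) is Theorem~\ref{thm:analytic-spread}. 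Item (6) is Corollary~\ref{cor:linear-resolution}, which gives $\operatorname{reg}(I^n)=2n=\operatorname{reg}((I')^n)$.

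I do not expect a genuine obstacle. The one point needing care is the degenerate case $m=1$, where both ideals are zero, so the power-related statements are trivial or vacuous and the earlier results (which assume $m\ge2$) do not apply. I will therefore treat $m=1$ separately, noting that the agreement of all invariants still holds by the isomorphism argument, and then invoke the cited results only for $m\ge 2$.
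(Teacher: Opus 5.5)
Your proposal follows the paper's proof: you obtain $\Gamma_P(R)\cong\Gamma_{P'}(R')$ from Proposition~\ref{prop:ring-induced-split}, relabel variables, and read off each invariant from Proposition~\ref{prop:minimal-vertex-covers-ring}, Corollary~\ref{cor:primary-decomposition-ring}, Theorem~\ref{thm:number-generators}, Corollary~\ref{cor:hilbert-function-fiber}, Theorem~\ref{thm:analytic-spread} and Corollary~\ref{cor:linear-resolution}; your explicit transport of structure via $\varphi$ is a harmless addition. One correction: for $m=1$ both ideals are zero, so the invariants still agree, but the explicit value $\operatorname{reg}(I^n)=2n$ is false there rather than trivial or vacuous, so that part of the statement needs $|P|\ge 2$, which is the standing assumption $a\ge 1$ of Section~6.
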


\begin{proof}
Set
\[
m=|P|=|P'|
\qquad\text{and}\qquad
q=|R/P|=|R'/P'|.
\]
By Proposition~\ref{prop:ring-induced-split}, we have
\[
\Gamma_P(R)\cong K_{m-1}\vee \overline{K}_{m(q-1)}
\]
and
\[
\Gamma_{P'}(R')\cong K_{m-1}\vee \overline{K}_{m(q-1)}.
\]
Hence
\[
\Gamma_P(R)\cong \Gamma_{P'}(R').
\]
Therefore their edge ideals are isomorphic after a relabeling of variables.

The equality of the numbers of minimal vertex covers and the equality of the
minimal-prime data follow from Proposition~\ref{prop:minimal-vertex-covers-ring}
and Corollary~\ref{cor:primary-decomposition-ring}. The equality of the ordinary-power
generator counts follows from Theorem~\ref{thm:number-generators}. The equality
of the Hilbert functions of the special fiber rings follows from
Corollary~\ref{cor:hilbert-function-fiber}. The equality of analytic spreads follows
from Theorem~\ref{thm:analytic-spread}. Finally, the equality of the regularity
sequences follows from Corollary~\ref{cor:linear-resolution}. Hence all listed
invariants depend only on the pair $(|P|,\ |R/P|)$ and not on the particular finite ring \(R\) or the particular prime ideal \(P\).
\end{proof}
\begin{remark}
Proposition~\ref{prop:ring-parameter-invariance} shows that the algebraic invariants studied in this paper are invariants of the finite-ring parameter pair \((|P|,|R/P|)\), rather than of a particular presentation of the ring.
\end{remark}
\section{Examples}

In this section we illustrate the preceding results for some finite commutative rings.
The examples show how the parameters
\[
m=|P|,\qquad q=|R/P|,\qquad a=m-1,\qquad b=m(q-1)
\]
determine the prime ideal graph, the edge ideal, and the ordinary-power generator
counts.

\begin{example}
Let \(R=\mathbb{Z}_6\) and let \(P=(3)=\{0,3\}\). Then
\[
|P|=2,\qquad |R/P|=3.
\]
Hence
\[
m=2,\qquad q=3,\qquad a=m-1=1,\qquad b=m(q-1)=4.
\]
Therefore
\[
\Gamma_P(R)\cong K_1\vee \overline{K}_4.
\]
Thus \(\Gamma_P(R)\) is a star graph on five vertices. If
\[
S=k[x,y_1,y_2,y_3,y_4],
\]
then
\[
I(\Gamma_P(R))=(xy_1,xy_2,xy_3,xy_4).
\]
By Corollary~\ref{cor:star-case-ring},
\[
\mu(I^n)=\binom{n+3}{3}.
\]
In particular,
\[
\mu(I^2)=\binom{5}{3}=10.
\]
\end{example}

\begin{example}
Let \(R=\mathbb{Z}_8\) and let \(P=(2)=\{0,2,4,6\}\). Then
\[
|P|=4,\qquad |R/P|=2.
\]
Hence
\[
m=4,\qquad q=2,\qquad a=m-1=3,\qquad b=m(q-1)=4.
\]
Therefore
\[
\Gamma_P(R)\cong K_3\vee \overline{K}_4.
\]
The corresponding edge ideal is
\[
I=(x_i x_j,\;x_i y_\ell
\mid 1\leq i<j\leq 3,\;1\leq i\leq 3,\;1\leq \ell\leq 4).
\]
Using Theorem~\ref{thm:number-generators} with \(n=2\), \(a=3\), and \(b=4\), we obtain
\[
\mu(I^2)=94.
\]
\end{example}

\begin{example}
Let
\[
R=\mathbb{F}_3\times \mathbb{Z}_4
\]
and let
\[
P=\{0\}\times \mathbb{Z}_4.
\]
Then \(P\) is a prime ideal of \(R\), because
\[
R/P\cong \mathbb{F}_3.
\]
Moreover,
\[
|P|=4,\qquad |R/P|=3.
\]
Thus
\[
m=4,\qquad q=3,\qquad a=m-1=3,\qquad b=m(q-1)=8.
\]
Hence
\[
\Gamma_P(R)\cong K_3\vee \overline{K}_8.
\]
This example illustrates the realization theorem: the complete split graph
\(K_3\vee \overline{K}_8\) arises from a prime ideal of a finite commutative ring.

Using Theorem~\ref{thm:number-generators} with \(n=2\), \(a=3\), and \(b=8\), we obtain
\[
\begin{aligned}
\mu(I^2)
&=
\sum_{s=0}^{2}
\binom{s+7}{7}
\left[
\binom{4-s+2}{2}
-
3\binom{2-s+1}{2}
\right]  \\
&=
1(15-9)+8(10-3)+36(6-0) \\
&=
6+56+216 \\
&=
278.
\end{aligned}
\]
\end{example}
Table~\ref{tab:ring-parameters} summarizes the ring parameters and the derived invariants for the three worked examples.
\begin{table}[h]
\centering
\caption{Ring parameters, generator counts, and homological invariants.}
\label{tab:ring-parameters}
\begin{tabular}{c c c c c c c c c}
\hline
Ring \(R\) & Prime ideal \(P\) & \(m\) & \(q\) & \(a\) & \(b\) &
\(\mu(I^2)\) & \(\ell(I)\) & \(\operatorname{reg}(I^2)\) \\
\hline
\(\mathbb{Z}_6\) & \((3)\) & \(2\) & \(3\) & \(1\) & \(4\) & \(10\) & \(4\) & \(4\) \\
\(\mathbb{Z}_8\) & \((2)\) & \(4\) & \(2\) & \(3\) & \(4\) & \(94\) & \(7\) & \(4\) \\
\(\mathbb{F}_3\times \mathbb{Z}_4\) & \(\{0\}\times \mathbb{Z}_4\) & \(4\) & \(3\) & \(3\) & \(8\) & \(278\) & \(11\) & \(4\) \\
\hline
\end{tabular}
\end{table}

\section{Conclusion}

In this paper we studied ordinary powers and fiber-cone invariants of edge ideals arising from prime ideal graphs of finite commutative rings. The first main result gives a ring-theoretic classification of the underlying graphs. If \(P\) is a proper prime ideal of a finite commutative ring \(R\), and $m=|P|$, $q=|R/P|$,
then \(q\) is a prime power and
$\Gamma_P(R)\cong K_{m-1}\vee \overline{K}_{m(q-1)}$.
Conversely, every complete split graph of this form is realized as the prime ideal graph of a suitable finite commutative ring. Thus, the graph family considered here is not arbitrary; it is determined by the finite-ring pair \((|P|,|R/P|)\).
Using this classification, we determined the minimal vertex covers and the irredundant primary decomposition of \(I=I(\Gamma_P(R))\). We then described the minimal monomial generators of every ordinary power \(I^n\) and obtained a closed formula for \(\mu(I^n)\). These formulas show that the ordinary-power generator sequence depends only on the two ring-theoretic parameters \(|P|\) and \(|R/P|\).
We also studied the special fiber ring \(\mathcal{F}(I)\). The generator-count formula was interpreted as the Hilbert function of \(\mathcal{F}(I)\), and the analytic spread was computed explicitly. Moreover, by identifying \(\mathcal{F}(I)\) with the edge ring of \(\Gamma_P(R)\), we proved that \(\mathcal{F}(I)\) is a normal Cohen-Macaulay affine semigroup ring. Finally, we showed that \(I\) is matroidal and that all ordinary powers \(I^n\) are polymatroidal. Hence \(I^n\) has linear quotients and a \(2n\)-linear minimal free resolution for every \(n\geq 1\).
These results provide a uniform ordinary-power, fiber-cone, and homological analysis for a ring-realizable family of complete split graphs. Possible further directions include the study of graded Betti numbers, Rees algebra invariants, depth functions, and related edge ideals arising from other graph constructions over finite commutative rings.

\section*{Statements and Declarations}

\subsection*{Funding} 
The authors received no specific funding for this work.
\subsection*{Competing interests}
The authors declare that they have no competing interests.
\subsection*{Data availability}
No datasets were generated or analyzed during the current study.

\end{document}